\newtheorem{theorem}{Theorem}[section]
\newtheorem{lemma}[theorem]{Lemma}
\newtheorem{proposition}[theorem]{Proposition}
\theoremstyle{definition}
\newtheorem{definition}[theorem]{Definition}
\theoremstyle{remark}
\numberwithin{equation}{section}
\newcommand{\R}{\mathbb{R}}
\newcommand{\abs}[1]{\left|#1\right|}
\newcommand{\cone}{{\mathcal{C}_{\alpha}}}
\newcommand{\dcone}{{\mathcal{D}_{\alpha}}}
\begin{document}
\title[]{Rotational Symmetry of Asymptotically Conical Mean Curvature Flow Self-expanders}
\author[Frederick~Fong]{Frederick~Tsz-Ho~Fong}
\author[Peter~McGrath]{Peter~McGrath}

\date{}
\address{Department of Mathematics, Hong Kong University of Science and Technology, Clear Water Bay, Kowloon, Hong Kong}\email{frederick.fong@ust.hk}
\address{Department of Mathematics, Brown University, Providence,
RI 02912}  \email{peter\_mcgrath@math.brown.edu}

\maketitle
\begin{abstract}
In this article, we examine complete, mean-convex self-expanders for the mean curvature flow whose ends have decaying principal curvatures. We prove a Liouville-type theorem associated to this class of self-expanders. As an application, we show that mean-convex self-expanders which are asymptotic to $O(n)$-invariant cones are rotationally symmetric.
\end{abstract}

\section{Introduction}

Let $F : \Sigma^n \to \R^{n+1}$ be an orientable hypersurface with Gauss map $\nu$. We use the following convention for the second fundamental form and the mean curvature:
\begin{align*}
h_{ij} & = \left\langle\frac{\partial^2 F}{\partial u_i\partial u_j},\,\nu\right\rangle = -\left\langle\frac{\partial F}{\partial u_i},\,\frac{\partial \nu}{\partial u_j}\right\rangle\\
H & = g^{ij}h_{ij}.
\end{align*}
Under this convention, $\Sigma$ is said to be a \emph{self-expander} for the mean curvature flow (MCF) if the following holds:
\begin{equation}
\label{eq:MCF_Expander}
H = \frac{1}{2}\langle F, \nu\rangle.
\end{equation}
If $F$ satisfies \eqref{eq:MCF_Expander}, then the family of immersions $F_t := \sqrt{t} F$, which evolves by expanding $\Sigma$ homothetically, will satisfy the mean curvature flow (MCF):
\begin{equation}
\label{eq:MCF_perp}
\left(\frac{\partial F_t}{\partial t}\right)^\perp = H_t \nu_t,
\end{equation}
where $H_t$ and $\nu_t$ are respectively the mean curvature and Gauss map of $F_t$.

Motivation for the study of self-expanders goes back to work of Ecker-Huisken \cite{ecker1989mean}, who studied MCF evolutions of entire graphical immersions $F: \Sigma^{n} \to \R^{n+1}$.  Assuming a linear growth and Lipschitz condition on the graph function, they were able to show the flow exists for all time.  Furthermore, assuming the initial data satisfied a ``flatness'' condition
\begin{align}
\label{Eecker}
\left| \langle F, \nu \rangle \right| \leq C ( 1+ |F|)^{1- \delta}
\end{align}
(for some $\delta>0$ and $C>0$), they showed that appropriate parabolic blow-downs of the flow converge to a self-expander.    Stavrou \cite{stavrou1998selfsimilar} later proved this same result under weaker hypotheses, by relaxing the condition \eqref{Eecker} to the condition that the graphical function $w$ have a unique tangent cone at infinity.  Self-expanders can thus be viewed as models for the long term behavior of MCF.

Ecker-Huisken also observed that each $O(n)$-invariant cone $\cone$ with cone angle $\alpha$ possesses a unique \emph{graphical} self-expander asymptotic to $\cone$.  This self-expander is $O(n)$-invariant.  Later, Angenent, Chopp and Ilmanen \cite{angenent1995computed} considered self-expanders asymptotic to $O(n)$-invariant \emph{double} cones $\mathcal{D}_\alpha$ in $\R^{n+1}$.  Interestingly, by relaxing the graphicality assumption, they were able to find cones $\cone$ with more than one $O(n)$-invariant self-expanding evolution.  In acoordance with Ecker-Huisken's work above, for each $\alpha \in (0, \pi/2)$ they were able to find a two-sheeted evolution described as a bigraph over the base plane.  However, they found that for each $n\geq 3$, there is a critical cone angle $\alpha_{\textup{crit}}(n)$ such that a one-sheeted evolution of $\dcone$ exists whenever $\alpha \geq \alpha_{\textup{crit}}(n)$. 
 
 Subsequently, Helmensdorfer \cite{helmensdorfer2012solitons} rigorously proved the existence of a \emph{second} one-sheeted rotationally symmetric expander asymptotic to $\dcone$ for each $\alpha > \alpha_{\textup{crit}}(n)$.  In short: for large enough $\alpha$, the double cone $\dcone$ has at least three distinct rotationally symmetric self-expanding evolutions.

There are analogues and generalizations of the above results in higher codimension, especially for the Lagrangian mean curvature flow (LMCF).  Neves and Tian \cite{Neves:2013hh} showed that blow-downs of eternal solutions of LMCF converge to LMCF self-expanders.  Chau, Chen and He \cite{Chau:2009wu, Chau:2011ff} studied LMCF on entire Lagrangian graphs and proved uniqueness of graphical LMCF self-expanders asymptotic to Lagrangian cones over a real plane.  Examples of LMCF self-expanders asymptotic to Schoen-Wolfson's cones were constructed by Lee and Wang \cite{Lee:2009ty, Lee:2010bu}, and examples asymptotic to a pair of intersecting Lagrangian planes were constructed by Joyce, Lee and Tsui in \cite{Joyce:2010uw}.   Lotay and Neves \cite{Lotay:2013jf} later established a uniqueness theorem for LMCF self-expanders in the latter setting. 

Returning our attention back to self-expanding hypersurfaces, the multiplicity of rotationally symmetric examples constructed in Ecker-Huisken \cite{ecker1989mean}, Angenent-Chopp-Ilmanen \cite{angenent1995computed} and Helmensdorfer \cite{helmensdorfer2012solitons} suggest an intriguing and intricate relationship between MCF self-expanders, rotational symmetry and asymptotic cones. In \cite[P.1940]{angenent1995computed}, the authors posted a question of whether there are non-rotationally symmetric MCF solutions coming out of a double cone.

Partly motivated by this question and by the rotationally invariant known examples, we examine complete mean-convex MCF self-expanders whose ends have decaying principal curvatures (including those with conical ends and planar ends). We prove a Liouville-type theorem (Theorem \ref{Tliouville}) for eigenfunctions of a stability operator $L$ for a weighted area functional (analogous to the one studied by Colding and Minicozzi in \cite{colding2012generic} for self-shrinkers). Using this, we show that complete, mean-convex MCF self-expanders asymptotic to $O(n)$-invariant cones are themselves $O(n)$-invariant. Precisely, we prove the following:

\begin{theorem}
\label{TMain}
Suppose $\Sigma$ is a complete, mean-convex self-expander. If either:
\begin{itemize}
	\item $\Sigma$ has only one end $E$ which is $C^2$-asymptotic (in the sense of Definition \ref{Ddecay}) to a round cone $\cone$; or
	\item $\Sigma$ has only two ends $E_\alpha$ and $E_\beta$ which are $C^2$-asymptotic (in the sense of Definition \ref{Ddecay}) to round cones with the same rotation axis,
\end{itemize}
then $\Sigma$ is rotationally symmetric about the axis of the cone.

Furthermore, if in addition $\Sigma$ is convex, then the same conclusion holds by assuming the ends are only $C^1$-asymptotically conical.
\end{theorem}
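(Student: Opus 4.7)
The plan is to reduce the symmetry claim to a vanishing statement for a family of Jacobi fields and then deploy the Liouville-type Theorem~\ref{Tliouville}. Let $L_0 \subset \R^{n+1}$ be the common rotation axis of the asymptotic cone(s), and let $\mathfrak{k}$ denote the Lie algebra of rotations of $\R^{n+1}$ fixing $L_0$; every $X \in \mathfrak{k}$ is a Killing field on $\R^{n+1}$ vanishing on $L_0$ and is tangent to any $O(n)$-invariant cone with axis $L_0$. Setting $u_X := \langle X, \nu\rangle$ on $\Sigma$, invariance of $\Sigma$ under the flow of $X$ is equivalent to $u_X \equiv 0$, so proving $u_X \equiv 0$ for every $X \in \mathfrak{k}$ yields rotational symmetry.

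\medskip

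Because rotations through the origin are ambient isometries that preserve the expander equation \eqref{eq:MCF_Expander}, a standard first-variation computation gives $L u_X = 0$, where $L$ is the stability operator of the weighted area functional $\int e^{|F|^2/4}\,d\mu$ whose critical points solve \eqref{eq:MCF_Expander}. Mean-convexity supplies the key positive comparison function: using $H = \frac{1}{2}\langle F, \nu\rangle$ and a Simons-type identity one derives $LH = \lambda H$ for a specific eigenvalue $\lambda$, with $H > 0$ on all of $\Sigma$. The next step is to control $u_X/H$ at infinity. Since $X$ is tangent to $\cone$, $\langle X, \nu_{\cone}\rangle = 0$, so along an end $C^1$-asymptotic to $\cone$ one has $|u_X| \leq |X|\,|\nu - \nu_{\cone}| = O(|F|)\cdot o(1) = o(|F|)$. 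Conversely, the expander equation together with $\nu \to \nu_{\cone}$ shows that $H \sim c|F|$ along the end, where $c = \frac{1}{2}\langle F/|F|, \nu_{\cone}\rangle \neq 0$ because the cone is nontrivial. Hence $u_X/H \to 0$ at infinity. Feeding this into Theorem~\ref{Tliouville} should force $u_X \equiv c H$ for a constant $c$, and the vanishing limit then forces $c = 0$. The two-ended case proceeds identically, since both ends share the axis $L_0$ and so the same family $\mathfrak{k}$ serves both ends simultaneously.

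\medskip

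The main difficulty I anticipate is verifying the hypotheses of Theorem~\ref{Tliouville} rigorously: Liouville-type theorems of this kind typically require both pointwise and weighted $L^2$-type control of the Jacobi field against the rapidly growing exponential weight $e^{|F|^2/4}$, and translating the asymptotic cone condition into such bounds is a delicate interplay between the decay rate of $\nu - \nu_{\cone}$ and the weight. This is also where I expect the two regularity variants in the theorem to diverge: in the $C^2$-asymptotic case, decay of the second fundamental form along the end is directly available and feeds into the weighted estimates on $u_X$ and its derivatives; in the convex case with only $C^1$-asymptotic control one loses direct curvature decay, and the strategy should instead exploit the pointwise bound $|A| \leq H$ coming from convexity, using $H$ itself as an a priori barrier substituting for the missing second-order information.
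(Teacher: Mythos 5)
Your skeleton is the same as the paper's: reduce rotational symmetry to the vanishing of $f_R=\langle R,\nu\rangle$ for every rotational Killing field $R$ about the axis, note $Lf_R=0$ so that $\left(L+\frac12\right)f_R=\frac12 f_R$, and kill $f_R$ with Theorem \ref{Tliouville} via the quotient $f_R/(H+\varepsilon)$ and a maximum principle. But your central asymptotic claim is false and inverts the actual geometry. You assert $H\sim c|F|$ along the end with $c=\frac12\langle F/|F|,\nu_{\cone}\rangle\neq 0$. A cone with vertex at the origin is dilation-invariant, so its position vector is \emph{tangent} to it: $\langle \check F,\check\nu\rangle\equiv 0$, hence $c=0$. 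In fact the paper shows the opposite (see \eqref{eq:HDecay}): along an end even just $C^1$-asymptotic to $\cone$, $H=\frac12\langle \check F+u\check\nu,\,\check\nu+o(r^{-1})\rangle=o(1)$, i.e.\ the mean curvature \emph{decays}. With that, your mechanism for controlling $u_X/H$ collapses, and your estimate $|u_X|\le |X|\,|\nu-\nu_{\cone}|=o(|F|)$ is too weak anyway: Theorem \ref{Tliouville} requires $\sup_{\Sigma\setminus B_r(0)}|f|\to 0$, not $o(|F|)$. The paper obtains $|f_R|=o(1)$ (Proposition \ref{lma:C2Decay}) from the sharper statement $\nu=\check\nu+o(r^{-1})$, which beats the linear growth $|R(\check F)|\le Cr$; your argument only used $|\nu-\nu_{\cone}|=o(1)$ and so loses exactly the factor of $r$ that makes the estimate work.

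Two further corrections. First, the conclusion of Theorem \ref{Tliouville} is $f\equiv 0$ outright, not ``$u_X\equiv cH$'': since $LH=-H$, the function $H$ satisfies $\left(L+\frac12\right)H=-\frac12H$, an eigenfunction with eigenvalue $-\frac12$, while $f_R$ has eigenvalue $+\frac12$, so $u_X=cH$ with $c\neq 0$ is not even consistent with the equations. Second, your anticipated difficulty --- weighted $L^2$ control against $e^{|F|^2/4}$ --- does not arise: the paper's Liouville theorem is a purely pointwise maximum-principle argument for $u=f/(H+\varepsilon)$, and its hypotheses are exactly uniform decay of $|f|$ and uniform decay of $|A|$; the latter ensures $|A|^2<\lambda$ near infinity and, with mean-convexity on the compact core, yields the positivity \eqref{Econst} of the zeroth-order coefficient. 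What your proposal omits entirely is the verification that $|A|\to 0$ at infinity in the $C^2$ case, which is the actual content of Proposition \ref{lma:C2Decay} (a computation of the fundamental forms of the normal graph over the cone giving $|A|^2=\frac{(n-1)\cot^2\alpha}{r^2}+o(r^{-2})$). Your instinct for the convex/$C^1$ case --- use $0\le \kappa_i\le H$ so that $|A|\lesssim H$ --- is indeed the paper's argument, but it succeeds precisely because $H=o(1)$ by \eqref{eq:HDecay}, i.e.\ the opposite of your claimed linear growth; as written, your version of that step cannot close.
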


Our approach of proving rotational symmetry is inspired by works by Schoen \cite{Schoen:1983} and Solomon-Simon \cite{Simon1986} on minimal surfaces, works by Brendle \cite{brendle2013rotational,brendle2014rotational}, Chodosh \cite{chodosh2014expanding}, Chodosh-Fong \cite{chodosh2016rotational} on Ricci solitons, works by Haslhofer \cite{haslhofer2015uniqueness} and Bourni-Langford \cite{Bourni:2016tn} on translating MCF solutions, as well as by Drugan-Fong-Lee's recent work \cite{drugan2016rotational} on self-expanders of the inverse mean curvature flow. 

It is interesting to compare Theorem \ref{TMain} to analogous uniqueness results for MCF self-shrinkers.  Wang \cite{wang2014uniqueness} proved there is at most one embedded, self-shrinking end asymptotic to each (not necessarily $O(n)$-invariant) regular cone $\mathcal{C}$ with vertex at $0$.  This is in contrast to the situation for self-expanders, where multiple examples (e.g. \cite{helmensdorfer2012solitons, angenent1995computed} as described above) asymptotic to the same cone exist. Wang \cite{wang2016uniqueness} also proved a similar uniqueness theorem for self-shrinkers with asymptotically cylindrical ends, under infinite-order asymptotic assumptions.   Wang's methods rely on the unique solvability of a backwards heat type equation.  

\textbf{Acknowledgement.} The first named author would like to thank Hojoo Lee for many inspiring discussions about self-similar solutions in various curvature flows. He is partially supported by the Hong Kong RGC Early Career Grant (26301316).  The second named author thanks his advisor, Nikolaos Kapouleas, for his support and guidance.  

\section{Stability Operator for Self-Expanders}
In this section, we derive elliptic equations for the mean curvature and the support functions of rotational Killing vector fields. We use variational methods and employ the following formulae, which are well-known in the literature of geometric flows of hypersurfaces.

\begin{lemma}[c.f. \cite{BenAndrews:1994jx}, \cite{Huisken:1999wk}]
Suppose $F_s : \Sigma^n \to \R^{n+1}$ is a smooth family of hypersurfaces satisfying $\displaystyle{\frac{\partial F_s}{\partial s} = \varphi_s \nu_s}$, where each $\varphi_s : \Sigma \to \R$ is a smooth scalar function, and $\nu_s$ is the Gauss map of $F_s$. Then
\begin{align}
\label{eq:nu} \frac{\partial \nu_s}{\partial s} & = -\nabla_s \varphi_s\\
\label{eq:H} \frac{\partial H_s}{\partial s} & = \left(\Delta_s + \abs{A_s}^2\right)\varphi_s,
\end{align}
where $H_s$ and $A_s$ are the mean curvature and second fundamental form of $F_s$, and $\Delta_s$ and $\nabla_s$ are respectively the Laplacian and gradient with respect to $F_s$.
\end{lemma}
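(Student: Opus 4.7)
The plan is to verify the two identities by direct first-variation computations in local coordinates, using normal coordinates at a point to streamline the tensorial bookkeeping.

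First I would establish \eqref{eq:nu}. Since $\nu_s$ is a unit normal, differentiating $\langle \nu_s, \nu_s\rangle = 1$ in $s$ shows that $\partial_s \nu_s$ has no normal component and is therefore tangential to $F_s(\Sigma)$. To read off its tangential components I differentiate the orthogonality relation $\langle \nu_s, \partial_i F_s\rangle = 0$ in $s$ and substitute
\[
\partial_s \partial_i F_s = \partial_i(\varphi_s \nu_s) = (\partial_i \varphi_s)\nu_s + \varphi_s \partial_i \nu_s.
\]
The piece $\varphi_s\,\partial_i \nu_s$ is tangential, so pairing $\partial_s\nu_s$ against the tangent basis yields $\langle \partial_s \nu_s, \partial_i F_s\rangle = -\partial_i \varphi_s$, whence $\partial_s \nu_s = -g_s^{ij}(\partial_i \varphi_s)\,\partial_j F_s = -\nabla_s \varphi_s$.

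Next I would derive \eqref{eq:H} by differentiating $H_s = g_s^{ij}(h_s)_{ij}$ via the product rule. The metric variation $\partial_s g_{ij} = -2\varphi_s h_{ij}$ follows immediately from $\partial_s \partial_i F_s = \partial_i(\varphi_s \nu_s)$ and the orthogonality $\langle \nu_s, \partial_j F_s\rangle = 0$; inverting yields $\partial_s g^{ij} = 2\varphi_s h^{ij}$. For the second fundamental form, I would fix a point $p$ and work in normal coordinates for the induced metric so that $\Gamma_{ij}^k(p) = 0$ and $\partial g(p) = 0$, then apply $\partial_s$ to $h_{ij} = -\langle \partial_i F_s, \partial_j \nu_s\rangle$. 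Using \eqref{eq:nu} together with the Weingarten relation $\partial_j \nu_s = -h_j^k\, \partial_k F_s$ converts the computation into one involving second derivatives of $\varphi_s$ and a quadratic curvature term, producing the tensorial identity
\[
\partial_s h_{ij} = \nabla_i \nabla_j \varphi_s - \varphi_s\, h_i^{\ k} h_{kj}.
\]

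Combining the pieces, $\partial_s H_s = (\partial_s g^{ij})h_{ij} + g^{ij}\,\partial_s h_{ij} = 2\varphi_s |A_s|^2 + \Delta_s \varphi_s - \varphi_s |A_s|^2 = (\Delta_s + |A_s|^2)\varphi_s$, which is \eqref{eq:H}. I do not anticipate any deep obstacle here; the main care required is sign bookkeeping under the convention $h_{ij} = -\langle \partial_i F, \partial_j \nu\rangle$ adopted in the paper, and ensuring that the second-derivative term in $\partial_s h_{ij}$ is expressed as a genuine Hessian rather than a coordinate second derivative. Passing to normal coordinates at the point of computation kills the Christoffel corrections, and tensoriality then promotes the pointwise formula to a covariant identity valid throughout $\Sigma$.
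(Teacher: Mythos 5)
Your computation is correct: the tangentiality argument for $\partial_s \nu_s$, the metric variation $\partial_s g_{ij} = -2\varphi_s h_{ij}$ (hence $\partial_s g^{ij} = 2\varphi_s h^{ij}$), and the variation $\partial_s h_{ij} = \nabla_i \nabla_j \varphi_s - \varphi_s\, h_i^{\ k} h_{kj}$ computed in normal coordinates assemble exactly as you say into $\partial_s H_s = \left(\Delta_s + |A_s|^2\right)\varphi_s$, with all signs consistent with the paper's convention $h_{ij} = -\left\langle \partial_i F, \partial_j \nu \right\rangle$. The paper itself offers no proof of this lemma, simply citing \cite{BenAndrews:1994jx} and \cite{Huisken:1999wk}, and your direct first-variation computation is precisely the standard argument given in those references, so you have in effect supplied the omitted proof rather than taken a different route.
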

\begin{proof}
See e.g. \cite[Theorem 3.7]{BenAndrews:1994jx} or \cite[Theorem 3.2]{Huisken:1999wk}.
\end{proof}

The first identity below is an elliptic equation for the mean curvature. It can be proved by using local coordinates (e.g. Ding \cite[P.9]{Ding:2015vs}), or moving frames (see the self-shrinkers analog in Colding-Minicozzi \cite[P.780]{colding2012generic}). Nonetheless, we give a variational proof to unify with the proofs of Lemma \ref{lma:Laplace_f} and Lemma \ref{lma:Laplace_V}.

\begin{lemma}[c.f. \cite{Ding:2015vs}]
\label{lma:Laplace_H}
Suppose $F : \Sigma^n \to \R^{n+1}$ is a self-expander. Then
\begin{equation}
\label{eq:Laplace_H}
\Delta H + \frac{1}{2}\langle F, \nabla H\rangle + \left(\abs{A}^2 + \frac{1}{2}\right)H = 0,
\end{equation}
where $\Delta := \Delta_\Sigma$ and $\nabla := \nabla_\Sigma$.
\end{lemma}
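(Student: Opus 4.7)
The strategy is to exploit the fact that the self-expander equation \eqref{eq:MCF_Expander} is precisely the condition for the homothetic family $F_t := \sqrt{t}\,F$ to satisfy \eqref{eq:MCF_perp}, and to derive \eqref{eq:Laplace_H} by computing $\partial_t H_t|_{t=1}$ in two different ways. On one hand, $F_t$ is simply the dilation of $F$ by the factor $\sqrt{t}$, so the mean curvature pulled back to $\Sigma$ satisfies $H_t(p) = H(p)/\sqrt{t}$, giving $\partial_t H_t|_{t=1} = -\frac{1}{2}H$ directly.

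On the other hand, I would like to apply \eqref{eq:H} with $\varphi = H$, but \eqref{eq:H} requires a purely normal velocity, whereas in this parametrization $\partial_t F_t|_{t=1} = \frac{1}{2}F = \frac{1}{2}\langle F,\nu\rangle\,\nu + \frac{1}{2}F^T = H\,\nu + \frac{1}{2}F^T$ (using \eqref{eq:MCF_Expander}) carries a tangential piece $\frac{1}{2}F^T$. To remedy this, I would reparametrize by a smooth family of self-diffeomorphisms $\psi_t$ of $\Sigma$ with $\psi_1 = \mathrm{id}$, chosen so that $\tilde F_t := F_t \circ \psi_t$ has purely normal velocity; matching tangential components at $t=1$ forces $V := \partial_t \psi_t|_{t=1}$ to satisfy $dF(V) = -\frac{1}{2}F^T$. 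Now \eqref{eq:H} applied to $\tilde F_t$ with speed $H$ at $t=1$ yields $\partial_t \tilde H_t|_{t=1} = (\Delta + |A|^2)H$, while $\tilde H_t(p) = H_t(\psi_t(p))$ together with the chain rule produces $\partial_t \tilde H_t|_{t=1} = -\frac{1}{2}H + \langle \nabla H, V\rangle = -\frac{1}{2}H - \frac{1}{2}\langle F, \nabla H\rangle$, where the last identity uses that $\nabla H \in T\Sigma$ so only the tangential part $F^T$ of $F$ contributes to the pairing.

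Equating the two expressions for $\partial_t \tilde H_t|_{t=1}$ gives $(\Delta + |A|^2)H = -\frac{1}{2}H - \frac{1}{2}\langle F, \nabla H\rangle$, which rearranges into \eqref{eq:Laplace_H}. The only delicate point is the bookkeeping required to trade the dilation's tangential component $\frac{1}{2}F^T$ for a tangential reparametrization so that \eqref{eq:H} may be invoked directly; the remaining steps are routine. I expect this same device to be reused in the analogous derivations for the support functions in Lemmas \ref{lma:Laplace_f} and \ref{lma:Laplace_V}, thereby unifying the three variational proofs as the authors promise.
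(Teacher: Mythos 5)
Your proof is correct and follows essentially the same route as the paper: both reparametrize the homothetic family $F_t = \sqrt{t}\,F$ so that the velocity is purely normal with speed $H$ at $t=1$, apply \eqref{eq:H} on one side, and equate with a direct computation of the $t$-derivative of the mean curvature. The only cosmetic difference is that the paper differentiates the support-function identity $H(F_t\circ\Phi_t) = \frac{1}{2t}\langle F_t\circ\Phi_t,\,\nu(F_t\circ\Phi_t)\rangle$ using \eqref{eq:nu}, whereas you differentiate the scaling law $H_t = H/\sqrt{t}$ and track the tangential reparametrization explicitly via $dF(V) = -\frac{1}{2}F^T$; both computations produce the same right-hand side $-\frac{1}{2}H - \frac{1}{2}\langle F, \nabla H\rangle$.
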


\begin{proof}
Recall that given a self-expander, the family $F_t := \sqrt{t}F$ satisfies \eqref{eq:MCF_perp}. By pre-composing a suitable re-parametrization $\Phi_t$ (where $\Phi_1 = \textup{id}$), we can arrange that
\begin{equation}
\label{eq:MCF}
\frac{\partial}{\partial t} \left(F_t \circ \Phi_t\right) = H(F_t \circ \Phi_t) \,\nu(F_t \circ \Phi_t).
\end{equation}
By \eqref{eq:H}, we have
\begin{equation}
\label{eq:H1}
\left.\frac{\partial}{\partial t}\right|_{t=1}H(F_t \circ \Phi_t) = \left(\Delta + \abs{A}^2\right)H.
\end{equation}
On the other hand, by invariance under re-parametrization and the self-expander equation \eqref{eq:MCF_Expander}, we have
\[H(F_t \circ \Phi_t) = \frac{1}{\sqrt{t}} H(F \circ \Phi_t) = \frac{1}{2t}\langle F_t \circ \Phi_t, \nu(F_t \circ \Phi_t)\rangle.\]
By differentiating both sides, we find
\begin{equation}
\label{eq:H2}
\begin{aligned}
\left.\frac{\partial}{\partial t}\right|_{t=1} H(F_t \circ \Phi_t) & = \left.\frac{\partial}{\partial t}\right|_{t=1}\frac{1}{2t}\langle F_t \circ \Phi_t, \, \nu(F_t \circ \Phi_t)\rangle\\
 & = -\frac{1}{2}\langle F, \nu\rangle + \frac{1}{2}\left.\frac{\partial}{\partial t}\right|_{t=1}\langle F_t \circ \Phi_t, \, \nu(F_t \circ \Phi_t)\rangle\\
 & = -H + \frac{1}{2} \langle H\nu, \nu\rangle + \frac{1}{2}\left\langle F, -\nabla H\right\rangle\\
 & = -\frac{1}{2}H - \frac{1}{2}\langle F, \nabla H\rangle,
\end{aligned}
\end{equation}
where we have used \eqref{eq:nu}.  \eqref{eq:Laplace_H} then follows by combining \eqref{eq:H1} and \eqref{eq:H2}.
\end{proof}

We next consider rotational Killing vector fields. Fixing an axis $\ell$, say the $x^{n+1}$-axis in $\R^{n+1}$, observe that a rotational vector field $R$ fixing $\ell$ (such as $- x^2 \partial_1 + x^1 \partial_2$) has round circles as integral curves.  In the rest of this article, if $F : \Sigma \to \R^{n+1}$ is an immersion and $R$ is as above, we define $f_R = \langle R(F), \nu(F)\rangle$.  If $f_R$ vanishes identically, then $R$ is a tangent field for $\Sigma$ and $\Sigma$ is invariant along its integral curves.

To show $\Sigma$ is rotationally symmetric, we will show that $f_R = \langle R, \nu\rangle$ vanishes for \emph{any} rotational Killing vector field about the given axis (c.f. \cite{haslhofer2015uniqueness} and \cite{drugan2016rotational}). This inner product plays a similar role as the Lie derivative $\mathcal{L}_U g$ in \cite{brendle2013rotational, brendle2014rotational, chodosh2014expanding, chodosh2016rotational}.

\begin{lemma}
\label{lma:Laplace_f}
Suppose $F : \Sigma^n \to \R^{n+1}$ is a self-expander and $R$ is a rotational Killing vector field. Then
\begin{equation}
\label{eq:Laplace_f}	
\Delta f_R + \frac{1}{2}\langle F, \nabla f_R\rangle + \left(\abs{A}^2 - \frac{1}{2}\right)f_R = 0.
\end{equation}
\end{lemma}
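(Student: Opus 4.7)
The plan is to mimic the variational argument that proves Lemma \ref{lma:Laplace_H}, replacing the homothetic family $F_t = \sqrt{t}\,F$ with the rotational family $F_s := \phi_s \circ F$, where $\phi_s$ is the one-parameter subgroup of rigid rotations of $\R^{n+1}$ generated by $R$.  Because the rotation axis passes through the origin, each $\phi_s$ lies in $SO(n+1)$, and one directly checks that $H(F_s) = H(F)$ and $\langle F_s, \nu(F_s)\rangle = \langle F, \nu\rangle$; in particular $F_s$ satisfies \eqref{eq:MCF_Expander} for every $s$.

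Next, I would decompose the initial velocity $\partial_s F_s|_{s=0} = R$ into its normal part $f_R \nu$ and tangential part $R^\top$, and reparametrize by diffeomorphisms $\Phi_s$ of $\Sigma$ with $\Phi_0 = \mathrm{id}$ and $\partial_s \Phi_s|_{s=0} = -R^\top$, so that $G_s := F_s \circ \Phi_s$ has purely normal initial velocity equal to $f_R \nu$.  Since composing with a diffeomorphism of the parameter domain is geometrically inert, each $G_s$ still satisfies \eqref{eq:MCF_Expander}, i.e.
\[
H(G_s) = \tfrac{1}{2}\langle G_s, \nu(G_s)\rangle.
\]

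The final step is to differentiate this identity at $s=0$.  By \eqref{eq:H}, the left side contributes $\left(\Delta + \abs{A}^2\right) f_R$.  For the right side, \eqref{eq:nu} gives $\partial_s \nu(G_s)|_{s=0} = -\nabla f_R$, so the derivative at $s=0$ equals $\tfrac{1}{2}\langle f_R \nu, \nu\rangle + \tfrac{1}{2}\langle F, -\nabla f_R\rangle = \tfrac{1}{2} f_R - \tfrac{1}{2}\langle F, \nabla f_R\rangle$.  Equating the two sides and rearranging yields \eqref{eq:Laplace_f}.

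The point that requires the most care is verifying the invariance of the self-expander equation under $\phi_s$; this is where the hypothesis that $R$ is \emph{rotational}, and hence fixes the origin, is essential (a generic Killing field on $\R^{n+1}$ would not preserve \eqref{eq:MCF_Expander}).  The reparametrization $\Phi_s$, while routine, is also indispensable, since the hypothesis of \eqref{eq:H} demands a purely normal variation.
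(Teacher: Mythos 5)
Your proposal is correct and follows essentially the same route as the paper: flow $\Sigma$ by the rotation group generated by $R$, reparametrize so the variation is normal with speed $f_R$, and differentiate the self-expander identity $H = \tfrac{1}{2}\langle F, \nu\rangle$ at $s=0$ using \eqref{eq:nu} and \eqref{eq:H}. The only (immaterial) difference is that you arrange normality of the velocity just at $s=0$ via $\partial_s \Phi_s|_{s=0} = -R^\top$, whereas the paper's $\Phi_s$ makes the velocity normal for all $s$; since the identity is only differentiated at $s=0$, both suffice.
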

\begin{proof}
Let $\Psi_s$ be the 1-parameter family of diffeomorphisms of $\R^{n+1}$ generated by the vector field $R$, i.e. satisfying $\displaystyle{\frac{\partial}{\partial s}\Psi_s = R \circ \Psi_s}$ and $\Psi_0 = \textup{id}$. Then, the composition $\Psi_s \circ F$ satisfies
\[\frac{\partial}{\partial s}\left(\Psi_s \circ F\right) = R(\Psi_s \circ F).\]
As in the proof of Lemma \ref{lma:Laplace_H}, there exists a suitable re-parametrization $\Phi_s$ with $\Phi_0 = \textup{id}$ such that $\Psi_s \circ F \circ \Phi_s$ satisfies
\begin{equation}
\frac{\partial}{\partial s}\left(\Psi_s \circ F \circ \Phi_s\right) = \underbrace{R(\Psi_s \circ F \circ \Phi_s)^{\perp}}_{\text{normal component}} = f_R(\Psi_s \circ F \circ \Phi_s)\,\nu(\Psi_s \circ F \circ \Phi_s).
\end{equation}
By \eqref{eq:H}, we obtain
\begin{equation}
\label{eq:H3}
\left.\frac{\partial}{\partial s}\right|_{s=0} H(\Psi_s \circ F \circ \Phi_s) = \left(\Delta + \abs{A}^2\right)f_R.
\end{equation}

On the other hand, $H$ is invariant under rotations, so from the self-expander equation \eqref{eq:MCF_Expander},
\[H(\Psi_s \circ F \circ \Phi_s) = \frac{1}{2}\langle \Psi_s \circ F \circ \Phi_s, \, \nu(\Psi_s \circ F \circ \Phi_s)\rangle.\]
By differentiating both sides, we get
\begin{equation}
\label{eq:H4}
\begin{aligned}
 \left.\frac{\partial}{\partial s}\right|_{s=0} H(\Psi_s \circ F \circ \Phi_s) & = \frac{1}{2}\langle f_R \nu, \,\nu\rangle + \frac{1}{2}\langle F, -\nabla f_R\rangle\\
 & = \frac{1}{2}f_R - \frac{1}{2}\langle F, \nabla f_R\rangle,
\end{aligned}
\end{equation}
where we have used \eqref{eq:nu}.  \eqref{eq:Laplace_f} then follows by combining \eqref{eq:H3} and \eqref{eq:H4}. 
\end{proof}

Next we state and sketch the proof of a similar identity for the inner product $\langle V, \nu\rangle$ where $V$ is a constant vector field in $\R^{n+1}$. We will not use this identity in the later part of this article, yet we expect that it may be useful when dealing with self-expanders with planar ends.
\begin{lemma}
\label{lma:Laplace_V}
Suppose $F : \Sigma^n \to \R^{n+1}$ is a self-expander and $V$ is a constant vector field in $\R^{n+1}$. Then,
\begin{equation}
\label{eq:Laplace_V}
\Delta \langle V, \nu \rangle + \frac{1}{2}\left\langle F, \nabla\langle V, \nu\rangle\right\rangle + \abs{A}^2\langle V, \nu \rangle = 0.
\end{equation}
\end{lemma}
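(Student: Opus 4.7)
The plan is to imitate the variational proofs of Lemma \ref{lma:Laplace_H} and Lemma \ref{lma:Laplace_f}, replacing the one-parameter family of rotations (or homotheties) by translations. Let $\Psi_s(x) = x + sV$, so that $\partial_s \Psi_s = V$ and $\Psi_0 = \textup{id}$, and set $F_s := \Psi_s \circ F = F + sV$. I would then choose a family of reparametrizations $\Phi_s$ with $\Phi_0 = \textup{id}$ so that the composition moves purely in the normal direction:
\begin{equation*}
\frac{\partial}{\partial s}(F_s \circ \Phi_s) = \langle V, \nu(F_s \circ \Phi_s)\rangle \,\nu(F_s \circ \Phi_s),
\end{equation*}
just as in the proof of Lemma \ref{lma:Laplace_f}. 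Applying \eqref{eq:H} at $s=0$ then gives
\begin{equation*}
\left.\frac{\partial}{\partial s}\right|_{s=0}H(F_s \circ \Phi_s) = \left(\Delta + \abs{A}^2\right)\langle V, \nu\rangle.
\end{equation*}

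For the second computation, the key observation is that $H$ is invariant under translations (unlike the scaling used in Lemma \ref{lma:Laplace_H}, which produced the factor $1/\sqrt{t}$). Thus $H(F_s\circ\Phi_s) = H(F\circ\Phi_s)$, and combined with the self-expander equation \eqref{eq:MCF_Expander} applied to $F$, one can write
\begin{equation*}
H(F_s \circ \Phi_s) = \tfrac{1}{2}\langle F \circ \Phi_s,\, \nu(F\circ\Phi_s)\rangle.
\end{equation*}
Differentiating in $s$ at $0$ introduces the tangential vector $X := \left.\partial_s\right|_{s=0}\Phi_s$, which is determined by the identity $dF(X) + V = \langle V,\nu\rangle\,\nu$, i.e.\ $X = -V^T$ where $V^T$ denotes the tangential component of $V$ along $\Sigma$. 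Hence
\begin{equation*}
\left.\frac{\partial}{\partial s}\right|_{s=0} H(F_s\circ\Phi_s) = -\tfrac{1}{2}\bigl\langle V^T,\, \nabla\langle F,\nu\rangle\bigr\rangle,
\end{equation*}
after using $\langle F, \nabla_{-V^T}\nu\rangle = -h(V^T, F^T)$ and $\langle dF(-V^T),\nu\rangle = 0$.

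Finally I would match the two expressions and convert the right-hand side into the desired drift term. Using the orthonormal frame formula $\nabla_{e_i}\nu = -\sum_j h_{ij}e_j$, one computes $\nabla\langle V,\nu\rangle = -A(V^T)$, and symmetry of the shape operator gives
\begin{equation*}
\tfrac{1}{2}\bigl\langle F,\, \nabla\langle V,\nu\rangle\bigr\rangle = -\tfrac{1}{2}h(V^T, F^T),
\end{equation*}
which exactly balances the term produced above. Equating the two expressions for $\partial_s H$ then yields \eqref{eq:Laplace_V}. The only delicate step is keeping track of sign conventions for $\nabla\nu$ and the reparametrization vector field $X$; once that is done, the derivation is entirely parallel to Lemmas \ref{lma:Laplace_H} and \ref{lma:Laplace_f}, with the essential simplification that the translational invariance of $H$ eliminates the $-H$ term that appeared in \eqref{eq:H2}, which is precisely why the coefficient in front of $\langle V,\nu\rangle$ is $\abs{A}^2$ rather than $\abs{A}^2 \pm \tfrac{1}{2}$.
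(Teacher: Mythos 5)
Your proposal is correct and follows essentially the same route as the paper's (sketched) proof: a translation family $F + sV$ reparametrized into a normal variation, formula \eqref{eq:H} for the left-hand side, and differentiation of the self-expander relation for the right-hand side, with your tangential chain-rule computation along $X = -V^T$ merely replacing the paper's direct appeal to \eqref{eq:nu}. One small slip: with your stated convention $\nabla_{e_i}\nu = -\sum_j h_{ij}e_j$, the parenthetical should read $\langle F, \nabla_{-V^T}\nu\rangle = +h(V^T, F^T)$ rather than $-h(V^T, F^T)$ (propagating the stated sign would flip the drift term); your displayed identities and the final matching are nevertheless consistent with the correct sign and do yield \eqref{eq:Laplace_V}.
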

\begin{proof}
We only sketch the proof, since it is similar to the proofs of Lemma \ref{lma:Laplace_H} and Lemma \ref{lma:Laplace_f}.  Consider the family of translating hypersurfaces $F_\tau := F + \tau V$ so that $\left(\frac{\partial F_{\tau}}{\partial\tau}\right)^\perp = \langle V, \nu\rangle \nu$. From the fact that
\[H(F_\tau) = H(F) = \frac{1}{2}\langle F, \nu(F)\rangle = \frac{1}{2}\langle F, \nu(F_\tau)\rangle,\]
one can differentiate both sides at $\tau = 0$ and show \eqref{eq:Laplace_V} using the variational formulae \eqref{eq:nu} and \eqref{eq:H}.
\end{proof}

In view of \eqref{eq:Laplace_H}, \eqref{eq:Laplace_f} and \eqref{eq:Laplace_V}, we define a stability operator $L$, analogous to the operator for mean curvature self-shrinkers introduced by Colding-Minicozzi in \cite{colding2012generic}, by 
\begin{equation}
\label{eq:L}
L\varphi := \Delta\varphi + \frac{1}{2}\langle F, \nabla\varphi\rangle +\left(\abs{A}^2- \frac{1}{2}\right) \varphi.
\end{equation}
Then, \eqref{eq:Laplace_H}, \eqref{eq:Laplace_f} and \eqref{eq:Laplace_V} can be expressed as
\begin{equation}
\label{Eeval}
\begin{aligned}
LH & = -H\\
Lf_R & = 0\\
L\langle V, \nu\rangle & = -\frac{1}{2} \langle V, \nu \rangle.
\end{aligned}
\end{equation}

\begin{lemma}
Let $\Sigma$ be a mean-convex self-expander.  Then, for any $\varepsilon > 0, \lambda \in \R$ and $f\in C^2(\Sigma)$ such that $\left(L+\frac{1}{2}\right)f = \lambda f$,
\begin{equation}
\label{eq:Quotient}
\begin{aligned}
 \Delta\left(\frac{f}{H+\varepsilon}\right)& = \left \langle - \frac{1}{2} F - \frac{2}{H+\varepsilon} \nabla (H+\varepsilon), \nabla\left(\frac{f}{H+\varepsilon}\right)\right\rangle\\
 &  + \frac{f}{(H+\varepsilon)^2} \left(\left(\lambda + \frac{1}{2}\right)H - \varepsilon \left( |A|^2 - \lambda\right)\right)	.
\end{aligned}
\end{equation}
\end{lemma}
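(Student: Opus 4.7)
The plan is to apply the standard quotient rule for the Laplacian with $u := H + \varepsilon$, then substitute the two elliptic equations
\[
\Delta f = \lambda f - |A|^2 f - \tfrac{1}{2}\langle F, \nabla f\rangle, \qquad \Delta H = -\tfrac{1}{2}H - |A|^2 H - \tfrac{1}{2}\langle F, \nabla H\rangle,
\]
obtained respectively from the hypothesis $(L+\tfrac{1}{2})f = \lambda f$ and from Lemma \ref{lma:Laplace_H} (since $LH = -H$), and finally to collect terms so that the drift-type and gradient-type contributions assemble into the claimed first-order operator acting on $f/(H+\varepsilon)$.

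Concretely, since $\varepsilon$ is constant, $\nabla u = \nabla H$ and $\Delta u = \Delta H$, and a direct computation gives
\[
\Delta\!\left(\frac{f}{u}\right) = \frac{\Delta f}{u} - \frac{f\,\Delta u}{u^2} - \frac{2}{u}\left\langle \nabla u,\, \nabla\!\left(\frac{f}{u}\right)\right\rangle,
\]
which isolates one of the two first-order terms we want on the right-hand side of \eqref{eq:Quotient}. I would substitute the two expressions for $\Delta f$ and $\Delta H$ into the first two terms. The first-order terms that remain, namely $-\tfrac{1}{2u}\langle F, \nabla f\rangle + \tfrac{f}{2u^2}\langle F, \nabla H\rangle$, combine via the identity $\nabla(f/u) = (u\nabla f - f\nabla u)/u^2$ into exactly $-\tfrac{1}{2}\langle F, \nabla(f/u)\rangle$, which supplies the other first-order term.

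It then remains to simplify the zeroth-order part, namely
\[
\frac{(\lambda - |A|^2)f}{u} + \frac{f(\tfrac{1}{2}H + |A|^2 H)}{u^2} = \frac{(\lambda - |A|^2)f(H+\varepsilon) + \tfrac{1}{2}fH + |A|^2 fH}{(H+\varepsilon)^2}.
\]
The $|A|^2 fH$ terms cancel up to an $\varepsilon$ contribution, leaving $\lambda f H - \varepsilon|A|^2 f + \lambda\varepsilon f + \tfrac{1}{2}fH$ in the numerator, which rearranges to $f[(\lambda + \tfrac{1}{2})H - \varepsilon(|A|^2 - \lambda)]$, matching the stated formula.

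The calculation is essentially bookkeeping, so there is no deep obstacle; the only delicate point is arranging the two drift terms $-\tfrac{1}{2}\langle F, \nabla f\rangle/u$ and $\tfrac{1}{2}f\langle F, \nabla H\rangle/u^2$ (coming from $L$ applied to $f$ and $H$) into a single clean drift on $f/(H+\varepsilon)$, and keeping careful track of how the coefficients of $|A|^2$ recombine so that the $\varepsilon$-dependent error term takes precisely the form $-\varepsilon(|A|^2 - \lambda)$. The mean-convexity hypothesis $H > 0$ is used only to ensure $u = H + \varepsilon > 0$ so that division is legitimate.
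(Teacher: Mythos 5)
Your proposal is correct and follows essentially the same route as the paper: apply the quotient rule for the Laplacian to $f/(H+\varepsilon)$, substitute the equations $\Delta f = \lambda f - |A|^2 f - \tfrac{1}{2}\langle F, \nabla f\rangle$ and $\Delta H = -\tfrac{1}{2}H - |A|^2 H - \tfrac{1}{2}\langle F, \nabla H\rangle$, recombine the two drift terms into $-\tfrac{1}{2}\langle F, \nabla(f/(H+\varepsilon))\rangle$, and simplify the zeroth-order terms to the stated $\varepsilon$-dependent expression. The paper's only cosmetic difference is that it expands $\Delta(H+\varepsilon)$ with the $\varepsilon$-terms written explicitly rather than noting $\Delta(H+\varepsilon)=\Delta H$, and your algebra checks out line by line.
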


\begin{proof}
From Lemma \eqref{eq:Laplace_H} and that $Lf + \frac{1}{2}f = \lambda f$, we have
\begin{equation}
\label{EHf}
\begin{aligned}
\Delta (H+\varepsilon)  & = - \frac{1}{2} \langle F, \nabla(H+\varepsilon)\rangle - \left( |A|^2 - \frac{1}{2}\right)(H+\varepsilon)\\
 & \quad - H + \varepsilon\left(|A|^2 - \frac{1}{2}\right),\\
 \Delta f & = - \frac{1}{2} \langle F , \nabla f\rangle - \left(|A|^2 - \lambda\right) f.
\end{aligned}
\end{equation}
For simplicity, we denote
\[u := \frac{f}{H+\varepsilon}.\]
Using \eqref{EHf} and the quotient identity
\begin{align*}
\Delta \left( \frac{g}{h} \right) = \frac{h\Delta g - g \Delta h}{h^2} - \frac{2}{h}\left \langle \nabla h, \nabla \left(\frac{g}{h}\right)\right\rangle,
\end{align*}
which holds for any $g, h \in C^2(\Sigma)$, we compute
\begin{align*}
& \Delta\left(\frac{f}{H+\varepsilon}\right)\\
& = (H+\varepsilon)^{-2}\left[(H+\varepsilon) \Delta f - f\Delta (H+\varepsilon)\right] - \frac{2}{H+\varepsilon} \left \langle  \nabla (H+\varepsilon), \nabla u\right \rangle \\
& = (H+\varepsilon)^{-2}\left[ (H+\varepsilon) \left( - \frac{1}{2}\langle F, \nabla f\rangle - \left(|A|^2 - \lambda\right) f\right)\right]\\ 
 & \quad + (H+\varepsilon)^{-2}f\left[   \frac{1}{2} \langle F, \nabla(H+\varepsilon)\rangle+  \left( |A|^2 - \frac{1}{2}\right)(H+\varepsilon)+ H - \varepsilon\left(|A|^2 - \frac{1}{2}\right)  \right]\\ 
 & \quad - \frac{2}{H+\varepsilon} \left \langle  \nabla (H+\varepsilon), \nabla u\right \rangle \\
&= - \frac{1}{2} \left \langle F, \frac{(H+\varepsilon) \nabla f - f \nabla(H+\varepsilon)}{(H+\varepsilon)^2} \right\rangle + \frac{f}{(H+\varepsilon)^2} \left(\left(\lambda+\frac{1}{2}\right)H - \varepsilon \left( |A|^2 - \lambda\right)\right)\\
& \quad - \frac{2}{H+\varepsilon} \left \langle  \nabla (H+\varepsilon), \nabla u\right \rangle \\
&= \left \langle - \frac{1}{2} F - \frac{2}{H+\varepsilon} \nabla (H+\varepsilon), \nabla u\right\rangle + \frac{f}{(H+\varepsilon)^2} \left(\left(\lambda+\frac{1}{2}\right)H - \varepsilon \left( |A|^2 - \lambda\right)\right).
 \end{align*}
This completes the proof of \eqref{eq:Quotient}.
\end{proof}

Next, we prove a Liouville-type theorem concerning the stability operator $L$ on the class of mean-convex self-expanders $\Sigma$ with decaying curvature in the sense that
	\begin{equation}
	\label{eq:Decay_A}
	\lim_{r \to \infty} \sup_{\Sigma \backslash B_r(0)} \abs{A} =0.
	\end{equation}
	Here $B_r(0)$ is the ball in $\R^{n+1}$ with radius $r$ centered at origin.
Examples of hypersurfaces satisfying \eqref{eq:Decay_A} include those which are asymptotically conical (to be discussed in the next section), and those which are asymptotically planar.

\begin{theorem}
\label{Tliouville}
Let $\Sigma$ be a mean-convex self-expander. Suppose $\Sigma$ satisfies \eqref{eq:Decay_A} and $f\in C^2(\Sigma)$ satisfies $\left(L+\frac{1}{2}\right)f = \lambda f$, where $\lambda > 0$ and
\begin{equation}
\label{eq:Decay_f}
\lim_{r\to \infty} \sup_{\Sigma \backslash B_r(0)} \abs{f} = 0.
\end{equation}
Then $f\equiv  0$.
\end{theorem}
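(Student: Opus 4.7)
The plan is to apply the maximum principle to the regularized ratio $u_\varepsilon := f/(H+\varepsilon)$ using the identity \eqref{eq:Quotient} of the preceding lemma, and then send $\varepsilon \to 0$ to derive a contradiction. Suppose for contradiction that $f \not\equiv 0$; after replacing $f$ by $-f$ if necessary, we may assume $\sup_\Sigma f > 0$. Since mean-convexity gives $H \geq 0$, the function $u_\varepsilon$ is well-defined and smooth, with $\sup u_\varepsilon > 0$. Because $|f| \to 0$ at infinity by \eqref{eq:Decay_f} while $H + \varepsilon \geq \varepsilon > 0$, we have $u_\varepsilon \to 0$ at infinity, so $u_\varepsilon$ attains its positive maximum at some interior point $p_\varepsilon \in \Sigma$.

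At $p_\varepsilon$, the first order terms in \eqref{eq:Quotient} vanish and $\Delta u_\varepsilon(p_\varepsilon) \leq 0$. Using $f(p_\varepsilon) > 0$, I would rearrange the identity into
\[
(\lambda + \tfrac{1}{2}) H(p_\varepsilon) + \varepsilon \lambda \leq \varepsilon |A|^2(p_\varepsilon).
\]
Since $H(p_\varepsilon) \geq 0$, this forces $|A|^2(p_\varepsilon) \geq \lambda > 0$. The curvature decay \eqref{eq:Decay_A} then yields some $R > 0$ with $|A|^2 < \lambda$ outside $\overline{B_R(0)}$, so $p_\varepsilon$ lies in the compact set $K := \Sigma \cap \overline{B_R(0)}$. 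Setting $M := \sup_K |A|^2 < \infty$, the displayed inequality rearranges to $H(p_\varepsilon) \leq \varepsilon(M-\lambda)/(\lambda + \tfrac{1}{2}) \to 0$ as $\varepsilon \to 0$.

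Next I would pass to a subsequence $p_\varepsilon \to p_0 \in K$; continuity gives $H(p_0) = 0$ and $|A|^2(p_0) \geq \lambda > 0$. By Lemma \ref{lma:Laplace_H}, $H$ satisfies the drift equation
\[
\Delta H + \tfrac{1}{2}\langle F, \nabla H\rangle = -(|A|^2 + \tfrac{1}{2}) H \leq 0,
\]
making $H$ a nonnegative supersolution of $\Delta + \tfrac{1}{2}\langle F, \nabla\cdot\rangle$ which attains its interior minimum $0$ at $p_0$. The strong minimum principle then forces $H \equiv 0$ on the connected component of $\Sigma$ containing $p_0$. Combined with the self-expander equation \eqref{eq:MCF_Expander}, $\langle F, \nu\rangle \equiv 0$ on this component, so it is a cone from the origin, and smoothness plus completeness force it to be a hyperplane through the origin, on which $|A| \equiv 0$. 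This contradicts $|A|^2(p_0) \geq \lambda > 0$.

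The main obstacle is that $H$ must be allowed to vanish (indeed, $H \to 0$ at infinity for asymptotically conical expanders), which is precisely what forces the $\varepsilon$-regularization and introduces the awkward term $\varepsilon(|A|^2 - \lambda)$ into the maximum-principle inequality. The delicate part is localizing the maximum points $p_\varepsilon$ to a fixed compact set using \eqref{eq:Decay_A}, sending $\varepsilon \to 0$, and then using the strong minimum principle together with the rigidity of minimal self-expanders ($H \equiv 0 \Rightarrow$ hyperplane through the origin) to close the contradiction.
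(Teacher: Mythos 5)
Your proof is correct, and while it runs on the same engine as the paper's --- the quotient identity \eqref{eq:Quotient} applied to $f/(H+\varepsilon)$, with the decay hypotheses \eqref{eq:Decay_A} and \eqref{eq:Decay_f} localizing an interior maximum --- the endgame is genuinely different. The paper never sends $\varepsilon \to 0$: reading mean-convexity strictly ($H>0$), it fixes a \emph{single} $\varepsilon$ for which the zeroth-order coefficient $\left(\lambda+\frac{1}{2}\right)H - \varepsilon\left(|A|^2-\lambda\right)$ is positive on all of $\Sigma$ (outside a ball because $|A|^2<\lambda$ there; on the compact core because $H\geq\delta>0$ and $|A|^2$ is bounded), so one evaluation at the maximum point gives an immediate contradiction. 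You instead extract a family of maximum points $p_\varepsilon$, localize them to a fixed compact set via the inequality $|A|^2(p_\varepsilon)\geq\lambda$, deduce $H(p_\varepsilon)\to 0$, pass to a limit $p_0$, and close with the strong minimum principle for the drift operator applied to $H$ (via Lemma \ref{lma:Laplace_H}) together with the rigidity of minimal self-expanders ($H\equiv 0$ and \eqref{eq:MCF_Expander} force $\langle F,\nu\rangle\equiv 0$, hence a dilation-invariant complete hypersurface, hence a hyperplane through the origin). What your route buys: it works assuming only $H\geq 0$, strictly weaker than what the paper's choice of $\varepsilon$ requires, since that choice needs $\inf H>0$ on the compact core. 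What it costs: extra machinery, and under the paper's strict reading of mean-convexity your last two steps are superfluous --- once $p_\varepsilon$ lies in a fixed compact set $K$ and $H(p_\varepsilon)\to 0$, a subsequential limit $p_0\in K$ with $H(p_0)=0$ already contradicts $H>0$ directly; the minimum principle and cone rigidity are needed only to handle the borderline case $H\geq 0$. One caveat you share with the paper: compactness of $\Sigma\cap\overline{B_R(0)}$ implicitly presumes the immersion is proper.
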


\begin{proof}
We first claim that there exists $\varepsilon >0$ such that
\begin{equation}
\label{Econst}
\left(\lambda + \frac{1}{2}\right)H - \varepsilon \left( |A|^2 - \lambda \right) > 0 \quad \text{on}\quad \Sigma.
\end{equation}
By the curvature decay condition \eqref{eq:Decay_A}, there exists a large $r > 0$ such that
\[\abs{A}^2 < \lambda \quad \text{on} \quad \Sigma \backslash B_r(0).\]
Hence, since $H> 0$ and $\lambda> 0$, \eqref{Econst} holds on $\Sigma \backslash B_r(0)$ for all $\varepsilon>0$.

By compactness of $\Sigma \cap B_\rho(0)$, there exists $C > \lambda$ such that
\[\abs{A}^2 < C \quad \text{on} \quad \Sigma.\]
Since $\Sigma$ is mean-convex, by compactness there is $\delta>0$ such that
\begin{align}
\label{Eepscore}
H>\delta \geq \frac{\delta}{C - \lambda} \left(|A|^2 - \lambda\right) \quad \text{on} \quad \Sigma \cap B_r(0).
\end{align}
From this, it follows that \eqref{Econst} holds with $\varepsilon = \frac{\delta\left(\lambda + \frac{1}{2}\right)}{C-\lambda}$.

We are now ready to show that $f \equiv 0$. For simplicity, we denote
\[u := \frac{f}{H+\varepsilon},\]
where $\varepsilon > 0$ is the fixed number such that \eqref{Econst} holds. By \eqref{eq:Quotient}, $u$ satisfies
\begin{equation}
\label{eq:Laplace_u}
\Delta u = -\left \langle \frac{1}{2} F + \frac{2}{H+\varepsilon} \nabla (H+\varepsilon), \nabla u\right\rangle + \frac{u}{H+\varepsilon} \left(\left(\lambda + \frac{1}{2}\right)H - \varepsilon \left( |A|^2 - \lambda\right)\right).
\end{equation}
To show $f \equiv 0$ on $\Sigma$, we argue by contradiction. Suppose there exists $p \in \Sigma$ such that
\[f(p) > 0, \text{ and equivalently } u(p) > 0.\]
Recall that $f$ satisfies \eqref{eq:Decay_f}. As $H + \varepsilon > \varepsilon > 0$, we also have
\[\lim_{r \to\infty}\sup_{\Sigma \backslash B_r(0)} \abs{u} = 0.\]
Hence, there exists a large $r > 0$ such that
\[u(q) < u(p) \quad \text{for any} \quad q \in \Sigma \backslash B_r(0).\]
By compactness of $\Sigma \, \cap \, B_r(0)$, $u$ must achieve an interior maximum at some $p' \in \Sigma \cap B_r(0)$. Note that 
\[u(p') \geq u(p) > 0.\]

Evaluating both sides of \eqref{eq:Laplace_u} at $p'$, we obtain
\begin{align*}
0 & \geq \left(\Delta u\right)(p')\\
& = \underbrace{-\left\langle \frac{1}{2}F + \frac{2}{H+\varepsilon} \nabla (H + \varepsilon),\,\nabla u\right\rangle(p')}_{= 0}\\
& \quad + \frac{u}{H+\varepsilon}(p')\underbrace{\left(\left(\lambda + \frac{1}{2}\right)H - \varepsilon \left( |A|^2 - \lambda\right)\right)(p')}_{\text{positive by \eqref{Econst}}}\\
& > 0,
\end{align*}	
which is clearly a contradiction. Hence, $u \leq 0$ on $\Sigma$. A similar argument (by considering an interior minimum point) shows $u \geq 0$ on $\Sigma$.  We conclude that $u \equiv 0$ (and equivalently $f \equiv 0$) on $\Sigma$.
\end{proof}

\section{Asymptotically Conical Ends}
In this section, we define what it means for an end of a hypersurface $\Sigma^{n}\subset \R^{n+1}$ (which need not be a self-expander) to be asymptotically conical and collect some geometric properties about such ends. 

\begin{definition}
\label{Dcone}
Given $0<\alpha\leq \pi/2$, the \emph{cone of angle alpha}, $\cone \subset \R^{n+1}$ is 
	\[ \cone = \{0\}\cup \left \{ 0\neq x\in \R^{n+1} : \left\langle \frac{x}{ | x|  }, e_{n+1}\right \rangle  = \cos \alpha \right\}.\]
\end{definition}

Clearly $\cone$ is orientable; let $\check\nu$ be the normal field on $\cone$ satisfying $\langle \check\nu, e_{n+1}\rangle> 0$.  From now on, we use the symbol $\;\check{}\;$ to denote geometric quantities of $\cone$.

\begin{definition}[Asymptotically Conical Ends]
\label{Ddecay}
We say $E\subset \Sigma^n$ is \emph{$C^k$-asymptotic to $\cone$} if there exists $\rho> 0$ and  $u\in C^k(\cone\setminus B_\rho(0))$ such that $E$ can be parametrized as a normal graph over $\cone$ of the form \begin{align*}
F(p) &= p + u(p) \, \check{\nu}(p),
\end{align*}
and $u$ satisfies
\begin{align*}
 \left|\check{\nabla}^j u(x) \right|_{\check{g}}  = o\left( |x|^{-j}\right) \text{ for } j = 0, 1, 2, \ldots, k,
\end{align*}
where $\check{\nabla}$ denotes the covariant derivative of $\cone$.
\end{definition}

The goal of this section is to prove the following.
\begin{proposition}
\label{lma:C2Decay}
Suppose $\Sigma$ is a hypersurface with an end $E$ which is $C^2$-asymptotic to $\cone$.  Let $R$ be a rotational Killing vector field about the axis of $\cone$. Then
\begin{enumerate}
\item $\displaystyle{|A|^2 = \frac{(n-1)\cot^2\alpha}{r^2} + o(r^{-2})}.$
\item $\displaystyle{\lim_{r\to \infty} \sup_{E\setminus B_r(0)} |f_R| = 0}.$
\end{enumerate}
\end{proposition}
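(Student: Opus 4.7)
\textbf{Computing reference quantities on $\cone$.} Parametrizing $\cone\setminus\{0\}$ by $(r,\theta)\in(0,\infty)\times S^{n-1}$ via $(r,\theta)\mapsto(r\sin\alpha\,\theta, r\cos\alpha)$, the induced metric is $dr^2+r^2\sin^2\alpha\,g_{S^{n-1}}$. A direct computation of $\check A=-d\check\nu$ shows $\cone$ has one vanishing principal curvature (radial direction) and $n-1$ principal curvatures equal to $\cot\alpha/r$, so
\[
|\check A|^2 = \frac{(n-1)\cot^2\alpha}{r^2}.
\]
Moreover, since $\cone$ is rotationally symmetric about its axis, any rotational Killing field $R$ about that axis is tangent to $\cone$, so $\langle R,\check\nu\rangle\equiv 0$ on $\cone$.

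\textbf{Proof of (1).} The end $E$ is parametrized as the normal graph $F(p)=p+u(p)\,\check\nu(p)$ for $p\in\cone\setminus B_\rho(0)$, where $|\check\nabla^j u|=o(r^{-j})$ for $j\le 2$. The plan is to compute the tangent frame and unit normal of $E$, then expand the second fundamental form of $E$ in terms of $\check A$, $u$, $\check\nabla u$ and $\check\nabla^2 u$. Differentiating $F$ gives $F_*(V)=V-u\check A(V)+(Vu)\,\check\nu$ for $V$ tangent to $\cone$; since $|u\check A|=o(1)$ and $|\check\nabla u|=o(r^{-1})$, the unit normal $\nu$ of $E$ equals $\check\nu$ plus a tangent correction of order $o(r^{-1})$. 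The pulled-back metric $g_{ij}$ satisfies $g_{ij}=\check g_{ij}+o(r^{-1})$, and the second fundamental form is $h_{ij}=\check h_{ij}-(\check\nabla^2 u)_{ij}+u\,(\check A^2)_{ij}+\text{lower order}$. Using $|\check\nabla^2 u|=o(r^{-2})$, $|\check A^2|=O(r^{-2})$ and $|u|=o(1)$, we obtain $h_{ij}=\check h_{ij}+o(r^{-2})$ and $g^{ij}=\check g^{ij}+o(r^{-1})$, so
\[
|A|^2 = g^{ik}g^{jl}h_{ij}h_{kl} = \check g^{ik}\check g^{jl}\check h_{ij}\check h_{kl} + o(r^{-2}) = \frac{(n-1)\cot^2\alpha}{r^2} + o(r^{-2}).
\]

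\textbf{Proof of (2).} Write $f_R=\langle R(F(p)), \nu(F(p))\rangle$. Since $R$ is a Euclidean Killing field of the form $-x^j\partial_i+x^i\partial_j$, its Euclidean derivative $DR$ is uniformly bounded, so
\[
R(F(p)) = R(p)+u(p)\,DR\cdot\check\nu(p) = R(p)+O(|u|) = R(p)+o(1).
\]
From the tangent-frame analysis above, $\nu(F(p))=\check\nu(p)-X(p)+o(r^{-1})\cdot\check\nu(p)$, where $X$ is tangent to $\cone$ with $|X|=o(r^{-1})$. Using $\langle R(p),\check\nu(p)\rangle=0$ on $\cone$ and $|R(p)|=O(r)$, we get
\[
|f_R(F(p))| \le |\langle R(p),X(p)\rangle| + O(|u(p)|) = O(r)\cdot o(r^{-1}) + o(1) = o(1)
\]
as $r\to\infty$, which gives the desired limit.

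\textbf{Main obstacle.} The only delicate point is keeping track of the orders of vanishing when passing from the $C^2$-asymptotic bounds on $u$ to bounds on the induced metric, unit normal, and second fundamental form of $E$; the growth $|R|=O(r)$ in (2) exactly cancels the $o(r^{-1})$ decay of the normal perturbation, so it is essential to use the full strength of $|\check\nabla u|=o(r^{-1})$ (rather than merely $O(r^{-1})$) to obtain $f_R=o(1)$, and similarly the full $o(r^{-2})$ bound on $\check\nabla^2 u$ for part (1).
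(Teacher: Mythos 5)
Your proposal is correct, and its overall strategy agrees with the paper's: write $E$ as a normal graph over $\cone$, convert the decay of $u$ and its covariant derivatives into asymptotics for the normal, the metric and the second fundamental form, and compare $f_R$ with $\check f_R \equiv 0$. For part (2) your argument is essentially identical to the paper's --- both reduce to $\abs{f_R} = \abs{f_R - \check f_R} \leq \abs{R(F) - R(\check F)}\,\abs{\nu} + \abs{R(\check F)}\,\abs{\nu - \check\nu} = o(1) + O(r)\cdot o(r^{-1})$; your extra step of splitting $\nu - \check\nu$ into a tangential part $X$ and a normal part and using the exact vanishing $\langle R, \check\nu\rangle = 0$ is a refinement the paper does not need, since the crude product bound already closes the estimate. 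Where you genuinely diverge is part (1): the paper computes every coordinate component of $g$ and $h$ explicitly in the $(r,\omega)$ parametrization, inverts $[g]$ via the Sherman--Morrison formula, and extracts $\abs{A}^2$ as the trace of $[g]^{-1}[h][g]^{-1}[h]$ componentwise, whereas you invoke the invariant linearization of the normal-graph geometry, $h = \check h \pm \bigl(\check\nabla^2 u - u\,\check A^2\bigr) + \text{lower-order terms}$, and estimate everything in the $\check g$-norm. Your route is shorter and coordinate-free, but two points need care to make it airtight: statements such as $g_{ij} = \check g_{ij} + o(r^{-1})$ are correct only as tensor estimates in the $\check g$-norm (the raw angular coordinate components of $g - \check g$ are merely $o(r)$, since $\check g_{ij} \sim r^2$ there), and the ``lower order'' remainder in the graph formula for $h$ --- the terms quadratic in $u$ and $\check\nabla u$ weighted by $\check A$, together with the normalization by $\abs{N}^{-1}$ --- must be verified to be $o(r^{-2})$ in the invariant norm, which it is under \eqref{Eudecay}; the sign ambiguity in your linearization is harmless since only magnitudes enter $\abs{A}^2$, and your bookkeeping (perturbation of $h$ of size $o(r^{-2})$ against $\abs{\check h} = O(r^{-1})$, metric perturbation $o(r^{-1})$ against $\abs{\check A}^2 = O(r^{-2})$) in fact yields the slightly stronger $\abs{A}^2 = \abs{\check A}^2 + o(r^{-3})$. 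In exchange for these caveats, the paper's explicit component computation verifies all remainders concretely, at the cost of length; your invariant argument also generalizes more readily to graphs over other asymptotic models.
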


Before proving Proposition \ref{lma:C2Decay}, we define a convenient parametrization $\check F$ of $\cone\setminus \{ 0\} $.  Let $\Phi(\theta_1, \dots, \theta_{n-1})$ be a parametrization of the unit $S^{n-1}$ (e.g., using hyperspherical coordinates) satisfying
\begin{align}
\label{Elambda}
\left\langle \frac{\partial \Phi}{\partial \theta_i}, \frac{\partial \Phi}{\partial \theta_j}\right \rangle = \lambda_i\delta_{ij}, \quad
\left\langle \frac{\partial^2 \Phi}{\partial \theta_i \partial \theta_j}, \Phi\right \rangle = - \lambda_i \delta_{ij} ,
\end{align}
where $\lambda_i, i=1, \dots, n-1$ is a function of $\omega: = (\theta_1, \ldots, \theta_{n-1})$ satisfying $0< \lambda_i \leq 1$.

Define $\check{F}: \R_+\times S^{n-1} \rightarrow \R^{n+1}$ by 
\begin{align*}
\check{F}(r, \omega) = \left(r\cos\alpha\right) e_{n+1} + \left(r\sin \alpha\right) \Phi(\omega).
\end{align*}

We have 
\begin{align*}
\frac{\partial \check{F}}{\partial r} &= \left(\cos \alpha\right) e_{n+1}+ \left(\sin \alpha\right) \Phi(\omega)\\
\frac{\partial \check{F}}{\partial \theta_i} &= \left(r\sin \alpha\right) \frac{\partial \Phi}{\partial \theta_i}.
\end{align*}

 The first fundamental form of $\cone$ is given by
\begin{align*}
\check{g}_{rr} & = \left\langle \frac{\partial \check{F}}{\partial r}, \frac{\partial \check{F}}{\partial r}\right\rangle = 1 & \check{g}_{rj} & = \left\langle \frac{\partial \check{F}}{\partial r}, \frac{\partial \check{F}}{\partial \theta_j}\right\rangle = 0\\
\check{g}_{ir} & = \left\langle \frac{\partial \check{F}}{\partial\theta_i}, \frac{\partial \check{F}}{\partial r}\right\rangle = 0 & \check{g}_{ij} & = \left\langle \frac{\partial \check{F}}{\partial \theta_i} , \frac{\partial \check{F}}{\partial \theta_j} \right\rangle = \left(r^2\sin^2\alpha\right)\lambda_i\delta_{ij}.
\end{align*}

It is easily verified that
\begin{align}
\label{Enu}
\check{\nu}(r, \omega) &= -\left(\cos \alpha\right)\Phi(\omega) + \left(\sin \alpha\right)e_{n+1}.
\end{align}
Note also that
\[\left\{\check{\nu}, \, \frac{\partial\check{F}}{\partial r}, \, \frac{\partial\Phi}{\partial\theta_i}\right\}_{i=1}^{n-1}\]
is an orthogonal frame for $\R^{n+1}$ at every point on $\cone\setminus \{0\}$; $\check{\nu}$ and $\frac{\partial\check{F}}{\partial r}$ have length $1$, although $\frac{\partial\Phi}{\partial\theta_i}$ may not.

In $(r, \omega)$ coordinates, the second fundamental form of $\cone$ is given by
\begin{align*}
\check{h}_{rr} & = 0 & \check{h}_{rj} & = 0\\
\check{h}_{ir} & = 0 & \check{h}_{ij} & = \left(r\sin\alpha\cos\alpha\right)\lambda_i \delta_{ij}.	
\end{align*}
From these computations, it is clear that the $\cone$ has $n-1$ principal curvatures equal to $\frac{\cot\alpha}{r}$ and one principal curvature equal to $0$. Therefore
\begin{align*}
\abs{\check{A}}^2 & = \frac{(n-1)\cot^2 \alpha}{r^2}.
\end{align*}

\begin{proof}[Proof of Proposition \ref{lma:C2Decay}]

 Straightforward computations show
\begin{align*}
\frac{\partial F}{\partial r} &= \frac{\partial\check{F}}{\partial r} +\frac{\partial u}{\partial r}\check{\nu} \\
\frac{\partial F}{\partial \theta_i} &= \frac{\partial u}{\partial \theta_i} \check{\nu} + r\left(\sin \alpha - \frac{u}{r} \cos \alpha\right) \frac{\partial \Phi}{\partial \theta_i}.
\end{align*}
For abbreviation, we define
\[Z(r,\omega) = \sin \alpha - \frac{u(r,\omega)}{r}\cos\alpha.\]
In $(r, \omega)$ coordinates, the first fundamental form of $E$ is given by
\begin{align*}
g_{rr} & = 1 + \left(\frac{\partial u}{\partial r}\right)^2 & g_{rj} & = \frac{\partial u}{\partial r}\frac{\partial u}{\partial\theta_j}\\
g_{ir} & = \frac{\partial u}{\partial\theta_i}\frac{\partial u}{\partial r} & g_{ij} & = r^2Z^2\lambda_i\delta_{ij} + \frac{\partial u}{\partial\theta_i}\frac{\partial u}{\partial\theta_j}.
\end{align*}

By direct computation (by taking inner products with $\frac{\partial F}{\partial r}$ and $\frac{\partial F}{\partial \theta_i}$),
\begin{align}
\label{EN}
N = Z \check{\nu} - \frac{1}{r}\sum_{i=1}^{n-1}\frac{\partial u}{\partial \theta_i} \frac{\partial \Phi}{\partial \theta_i} - Z\frac{\partial u}{\partial r} \frac{\partial\check{F}}{ \partial r}
\end{align}
is a (not unit) normal field to $E$. 

The $C^2$-asymptotic condition $\abs{\check{\nabla}^j u(x)} = o(\abs{x}^{-j})$ for $j = 0, 1, 2$ is equivalent to
\[\left(\frac{1}{r}\frac{\partial}{\partial\theta_{i_1}}\right)\cdots\left(\frac{1}{r}\frac{\partial}{\partial\theta_{i_l}}\right)\left(\frac{\partial^k}{\partial r^k}\right) u = o(r^{-(k+l)})\]
for any nonnegative $k$, $l$ such that $k + l \leq 2$.  This implies
\begin{equation}
\label{Eudecay}
\begin{aligned}
u & = o(1) & \frac{\partial u}{\partial r} & = o(r^{-1}) & \frac{\partial^2 u}{\partial r^2} & = o(r^{-2})\\
\frac{\partial u}{\partial\theta_i} & = o(1) & \frac{\partial^2 u}{\partial r\partial \theta_i} & = o(r^{-1}) & \frac{\partial^2 u }{\partial\theta_i\partial\theta_j} & = o(1).
\end{aligned}
\end{equation}

Under these asymptotics, it is easy to see that
\[Z = \sin\alpha + o(r^{-1}) \quad \text{ and hence } \quad N = \left(\sin\alpha\right)\check{\nu} + o(r^{-1}).\]

Consider now $f_R : = \langle R(F), \nu \rangle$. Since the cone $\cone$ is rotationally symmetric,
\[\check{f}_R := \langle R(\check{F}), \check{\nu}\rangle \equiv 0.\]
Therefore, the function $f_R$ satisfies
\begin{align*}
\abs{f_R} & = \abs{f_R - \check{f}_R} = \abs{\langle R(F), \nu\rangle - \langle R(\check{F}), \check{\nu}\rangle}\\
& \leq \abs{R(F) - R(\check{F})}\abs{\nu} + \abs{R(\check{F})}\abs{\nu - \check{\nu}}.
\end{align*}
There exists $C$ (depending only on $R$ and $n$) such that
\[\abs{R(F) - R(\check{F})} \leq C\abs{F - \check{F}} = C\abs{u} = o(1).\]
Recalling that 
\[\nu = \frac{N}{\abs{N}} = \frac{\left(\sin\alpha\right)\check{\nu} + o(r^{-1})}{\sin\alpha + o(r^{-1})} = \check{\nu} + o(r^{-1}),\]
we have
\[\abs{R(\check{F})}\abs{\nu - \check{\nu}} \leq C\abs{\check{F}} \abs{\nu - \check{\nu}} = o(1).\]
This concludes the proof of part (1). 

For part (2), we first claim that the coefficients of the second fundamental form of $E$ in $(r, \omega)$ coordinates satisfy
\begin{equation}
\label{Eh}
\begin{aligned}
h_{rr} & = \frac{Z}{\abs{N}}\frac{\partial^2u}{\partial r^2}\\
h_{rj} & = \frac{1}{\abs{N}}\left[Z\frac{\partial^2u}{\partial r \partial\theta_j} + \left(\left(\cos\alpha\right)\frac{\partial u}{\partial r} - \sin\alpha\right)\frac{\lambda_j}{r}\frac{\partial u}{\partial\theta_j}\right]\\
h_{ij} & =  \frac{1}{\abs{N}} \left[\frac{\partial^2u}{\partial\theta_i\partial\theta_j}Z + \frac{(\lambda_i + \lambda_j)\cos\alpha}{r}\frac{\partial u}{\partial\theta_i}\frac{\partial u}{\partial\theta_j} + rZ\left\langle \frac{\partial^2\Phi}{\partial\theta_i\partial\theta_j},\,N \right\rangle\right].
\end{aligned}
\end{equation}
We omit the straightforward calculations of the first two items above, but compute $h_{ij}$ in detail. First note that
\[\frac{\partial\check{\nu}}{\partial\theta_j} = -\left(\cos\alpha\right)\frac{\partial\Phi}{\partial\theta_j},\]
so direct computations establish that
\[\frac{\partial^2 F}{\partial\theta_i\partial\theta_j} = \frac{\partial^2u}{\partial\theta_i\partial\theta_j}\check{\nu} - \left(\cos\alpha\right)\left(\frac{\partial u}{\partial\theta_i}\frac{\partial\Phi}{\partial\theta_j}+\frac{\partial u}{\partial\theta_j}\frac{\partial\Phi}{\partial\theta_i}\right) + rZ\frac{\partial^2\Phi}{\partial\theta_i\partial\theta_j}.\]
This shows
\begin{align*}
h_{ij} & = \left\langle \frac{\partial^2F}{\partial\theta_i\partial\theta_j}, \frac{N}{\abs{N}}\right\rangle\\
& = \frac{1}{\abs{N}} \left[\frac{\partial^2u}{\partial\theta_i\partial\theta_j}Z + \frac{(\lambda_i + \lambda_j)\cos\alpha}{r}\frac{\partial u}{\partial\theta_i}\frac{\partial u}{\partial\theta_j} + rZ\left\langle \frac{\partial^2\Phi}{\partial\theta_i\partial\theta_j},\,N \right\rangle\right], 
\end{align*}
which establishes \eqref{Eh}.

Next, we combine the explicit formulas in \eqref{Eh} with the decay estimates \eqref{Eudecay} to estimate $|A|^2$.  By combining \eqref{Elambda}, \eqref{Enu} and \eqref{EN}, we find that
\begin{align*}
rZ\left \langle \frac{\partial^2 \Phi}{\partial \theta_i\theta_j}, N\right \rangle 
&= (rZ\sin\alpha \cos \alpha)\lambda_i \delta_{ij} + o(1), 
\end{align*}
 hence
\begin{align*}
h_{ij} &= (r\sin \alpha \cos \alpha) \lambda_i \delta_{ij} +o(1).
\end{align*}
Using the asympotics in \eqref{Eudecay} with \eqref{Eh}, the second fundamental form of $E$ satisfies
\begin{align*}
h_{rr} & = o(r^{-2}) & h_{rj} & = o(r^{-1})\\
h_{ir} & = o(r^{-1}) & h_{ij} &= (r \sin \alpha \cos \alpha) \lambda_i \delta_{ij} +o(1).
\end{align*}

In matrix form, the first fundamental form of $\Sigma$ can be written
\[
[g] = \underbrace{\begin{bmatrix} 1 & 0 \\ 0 & r^2Z^2\lambda_i\delta_{ij}\end{bmatrix}}_{=: M} + \underbrace{\begin{bmatrix}\frac{\partial u}{\partial r} \\ \frac{\partial u}{\partial\theta_i}\end{bmatrix}}_{=:\eta}\begin{bmatrix}\frac{\partial u}{\partial r} & \frac{\partial u}{\partial\theta_j}\end{bmatrix}.
\]
By Sherman-Morrison's formula, the inverse of $[g]$ is given by
\[
[g]^{-1} = M^{-1} - \frac{M^{-1}\eta\eta^TM^{-1}}{1+\eta^T M^{-1}\eta},
\]
where $M$ and $\eta$ are defined as above.  By direct computations, we see that
\begin{align*}
\eta^T M^{-1}\eta & = \left(\frac{\partial u}{\partial r}\right)^2 + \sum_{i=1}^{n-1}\frac{1}{r^2Z^2\lambda_i}\left(\frac{\partial u}{\partial\theta_i}\right)^2 = o(r^{-2})\\
M^{-1}\eta\eta^T & = \begin{bmatrix}\left(\frac{\partial u}{\partial r}\right)^2 & \frac{\partial u}{\partial r}\frac{\partial u}{\partial\theta_j} \\ \frac{1}{r^2Z^2\lambda_i}\frac{\partial u}{\partial\theta_i}\frac{\partial u}{\partial r} & \frac{1}{r^2Z^2\lambda_i}\frac{\partial u}{\partial\theta_i}\frac{\partial u}{\partial\theta_j} \end{bmatrix} = \begin{bmatrix} o(r^{-2}) & o(r^{-1}) \\ o(r^{-3}) & o(r^{-2})\end{bmatrix}\\
M^{-1}[h] & = \begin{bmatrix}h_{rr} & h_{rj} \\ \frac{1}{r^2Z^2\lambda_i}h_{ir} & \frac{1}{r^2Z^2\lambda_i}h_{ij}\end{bmatrix} = \begin{bmatrix}o(r^{-2}) & o(r^{-1}) \\ o(r^{-2}) & \frac{\cot \alpha}{r}\delta_{ij} +o (r^{-1})\end{bmatrix}.
\end{align*}
Combining all these asymptotics, we deduce finally that
\begin{align*}
[g]^{-1}[h][g]^{-1}[h] &= \begin{bmatrix} o(r^{-4}) & o(r^{-2}) \\ o(r^{-4}) & \frac{\cot^2 \alpha}{r^2} \delta_{ij} + o(r^{-2})\end{bmatrix},
\end{align*}
 and so 
 \begin{align*}
 \abs{A}^2 := \textup{Trace}\left([g]^{-1}[h][g]^{-1}[h]\right)
&=  \frac{(n-1)\cot^2\alpha}{r^2} + o(r^{-2})\\
&= \abs{\check{A}}^2 + o(r^{-2}).
 \end{align*}
 This concludes the proof of (2).
\end{proof}

\newpage
\section{Main Results}
We now collect the results of the previous sections to prove our main results.  

\begin{proposition}
\label{PAC}
Suppose $\Sigma$ is a complete, mean-convex self-expander. If $\Sigma$ has only one end $E$, which is $C^2$-asymptotic to a cone $\cone$, then $\Sigma$ is rotationally symmetric about the axis of $\cone$.
\end{proposition}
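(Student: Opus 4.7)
The plan is to reduce the proposition to an application of the Liouville-type theorem (Theorem \ref{Tliouville}) applied to the support functions of rotational Killing fields about the axis of $\cone$. To prove that $\Sigma$ is rotationally symmetric about the axis of $\cone$, it suffices to show that for every rotational Killing vector field $R$ on $\R^{n+1}$ whose rotation axis coincides with the axis of $\cone$, the function $f_R = \langle R, \nu\rangle$ vanishes identically on $\Sigma$; indeed, the space of such Killing fields is generated by the rotations in the coordinate planes through the axis, and if $f_R \equiv 0$ for each generator, then $R$ is everywhere tangent to $\Sigma$, so the flow of $R$ preserves $\Sigma$.

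First I would verify that $f_R$ satisfies the hypotheses of Theorem \ref{Tliouville}. From \eqref{Eeval}, $L f_R = 0$, which we rewrite as $\bigl(L + \tfrac{1}{2}\bigr) f_R = \tfrac{1}{2} f_R$, i.e. an eigenfunction equation with the positive eigenvalue $\lambda = \tfrac{1}{2}$. Next, the curvature decay \eqref{eq:Decay_A} on $\Sigma$ follows from Proposition \ref{lma:C2Decay}(1): since $E$ is the only end of $\Sigma$, the set $\Sigma \setminus B_r(0)$ coincides with $E \setminus B_r(0)$ for $r$ sufficiently large, and on this set
\begin{equation*}
|A|^2 = \frac{(n-1)\cot^2\alpha}{r^2} + o(r^{-2}) \longrightarrow 0.
\end{equation*}
Similarly, Proposition \ref{lma:C2Decay}(2) gives the asymptotic decay $\sup_{\Sigma\setminus B_r(0)} |f_R| \to 0$, which is exactly \eqref{eq:Decay_f}.

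Having checked all the hypotheses, Theorem \ref{Tliouville} applies with $f = f_R$ and $\lambda = \tfrac{1}{2}$, and yields $f_R \equiv 0$ on $\Sigma$. Since this holds for every rotational Killing field $R$ about the axis of $\cone$, we conclude that $\Sigma$ is invariant under the full rotation group about that axis, which is exactly the desired rotational symmetry.

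There is no real obstacle in this argument beyond carefully invoking the single-end hypothesis to transport the decay statements for $E$ (as provided by Proposition \ref{lma:C2Decay}) to the corresponding decay statements for all of $\Sigma$, which is what Theorem \ref{Tliouville} requires. The only mildly subtle bookkeeping is noting that mean-convexity of $\Sigma$ is exactly the hypothesis needed to run the maximum-principle argument in Theorem \ref{Tliouville}; this is given directly in the statement of the proposition.
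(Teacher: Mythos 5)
Your proposal is correct and is essentially identical to the paper's proof: both verify that $f_R$ satisfies $\left(L+\frac{1}{2}\right)f_R = \frac{1}{2}f_R$ via Lemma \ref{lma:Laplace_f}, use the single-end hypothesis together with Proposition \ref{lma:C2Decay} to obtain the decay hypotheses \eqref{eq:Decay_A} and \eqref{eq:Decay_f} on all of $\Sigma$, and then apply Theorem \ref{Tliouville} with $\lambda = \frac{1}{2}$ to conclude $f_R \equiv 0$ for every rotational Killing field $R$ about the axis. Your added remarks about generating the rotation group and the role of mean-convexity are correct amplifications of what the paper leaves implicit.
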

\begin{proof}
Let $R$ be a rotational Killing vector field about the axis of $\cone$, and denote $f_R := \langle R, \nu \rangle$. Since $\Sigma$ has only one end, it follows from Lemma \ref{lma:C2Decay} that 
\[ \lim_{r\to \infty} \sup_{E \backslash B_r(0)} \abs{A} = 0 \quad \text{ and } \quad \lim_{r\to \infty} \sup_{E \backslash B_r(0)} \abs{f_R} = 0.\]
By Lemma \ref{lma:Laplace_f}, the function $f_R$ satisfies
\[\left(L+\frac{1}{2}\right)f_R = \frac{1}{2}f_R,\]
where $L$ is the stability operator defined in \eqref{eq:L}. Therefore, Theorem \ref{Tliouville} implies  $f_R \equiv 0$ on $\Sigma$. Since $R$ is an arbitrary rotational Killing vector field about the axis of $\cone$, we conclude the rotational symmetry.
\end{proof}

Using a similar approach as in Proposition \ref{PAC}, we can establish the same result for mean-convex self-expanders with \emph{two} ends $E_\alpha$ and $E_\beta$, each of which is $C^2$-asymptotic to a round cone. The two asymptotic cones need not have the same cone angle, but they are required to be coaxial so that they have the same set of rotational Killing vector fields. In particular, these include self-expanders asymptotic to a \emph{double} cone. We state the result precisely below, but the proof is omitted since it is essentially the same as in Proposition \ref{PAC}.

\begin{proposition}
\label{PDC}
Suppose $\Sigma$ is a complete, mean-convex self-exapander. If $\Sigma$ has only two ends $E_\alpha$ and $E_\beta$, each end is $C^2$-asymptotic to a round cone, and the cones are coaxial, then $\Sigma$ is rotationally symmetric about the axis of the cones.
\end{proposition}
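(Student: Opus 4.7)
The plan is to mirror the argument of Proposition \ref{PAC} verbatim, using the coaxial hypothesis to ensure that a single rotational Killing vector field $R$ can be chosen which is tangent to the common axis of both asymptotic cones. Let $R$ be any rotational Killing vector field about this common axis, and set $f_R := \langle R, \nu\rangle$. By Lemma \ref{lma:Laplace_f}, $f_R$ satisfies
\[
\left(L + \tfrac{1}{2}\right) f_R = \tfrac{1}{2} f_R
\]
on all of $\Sigma$, so we are in position to apply the Liouville-type Theorem \ref{Tliouville} with eigenvalue $\lambda = 1/2 > 0$, provided we can verify the two decay hypotheses \eqref{eq:Decay_A} and \eqref{eq:Decay_f}.

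First I would verify the decay of $|A|$ and $|f_R|$ at infinity. Since $\Sigma$ is complete and has exactly two ends $E_\alpha$ and $E_\beta$, there exists $r_0 > 0$ such that $\Sigma \setminus B_{r_0}(0) \subseteq E_\alpha \cup E_\beta$. For each $r \geq r_0$, we then have
\[
\sup_{\Sigma \setminus B_r(0)} |A| \;\leq\; \max\!\left\{\sup_{E_\alpha \setminus B_r(0)} |A|,\; \sup_{E_\beta \setminus B_r(0)} |A|\right\},
\]
and similarly for $|f_R|$. Applying Proposition \ref{lma:C2Decay} to each of the two ends separately (this is where the $C^2$-asymptotic hypothesis is used on each end), both terms on the right tend to $0$ as $r \to \infty$. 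The crucial input from the coaxial assumption is that $R$ is rotational about the axis of \emph{both} cones simultaneously, so Proposition \ref{lma:C2Decay}(2) applies with the same function $f_R$ on each end; without coaxiality, the Killing field adapted to one cone would generically not have vanishing support function on the other end.

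Having verified both \eqref{eq:Decay_A} and \eqref{eq:Decay_f}, Theorem \ref{Tliouville} forces $f_R \equiv 0$ on $\Sigma$. Since $R$ was an arbitrary rotational Killing vector field about the common axis, $\Sigma$ is invariant under the full $O(n)$-action about that axis, proving rotational symmetry.

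The proof contains essentially no new obstacle beyond Proposition \ref{PAC}: the main subtlety is simply the bookkeeping observation that completeness with exactly two ends allows the supremum over a large complement to be decomposed into suprema over the two ends. If anything could fail, it would be the coaxial hypothesis, which is exactly what allows a single $R$ to pass through the argument; this is why the statement must insist the cones share an axis (so the rotational symmetry groups of $\cone$ and the second cone coincide).
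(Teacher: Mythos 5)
Your proposal is correct and is exactly the argument the paper intends: the authors omit the proof of Proposition \ref{PDC} precisely because it is ``essentially the same as in Proposition \ref{PAC},'' and your write-up carries that out, with the only added content being the (correct) bookkeeping that the supremum over $\Sigma\setminus B_r(0)$ splits over the two ends and that coaxiality lets a single Killing field $R$ satisfy the decay hypothesis \eqref{eq:Decay_f} on both ends via Proposition \ref{lma:C2Decay}. Nothing further is needed.
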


Finally, we remark that if the self-expander $\Sigma$ is \emph{convex} (which is more restrictive than mean-convex), then one can relax the regularity assumption in Propositions \ref{PAC} and \ref{PDC}. Suppose an end $E$ is just $C^1$-asymptotic to a cone $\cone$, then by the condition that $\abs{\check{\nabla}^j u(x)} = o(\abs{x}^{-j})$ for $j = 0, 1$, we know that
\begin{align*}
u & = o(1) & \frac{\partial u}{\partial r} & = o(r^{-1}) & \frac{\partial u}{\partial\theta_i} & = o(1)	
\end{align*}
as $r \to \infty$. Therefore, we still have
\begin{align*}
Z & = \sin\alpha + o(r^{-1}), & N & = \left(\sin\alpha\right)\check{\nu} + o(r^{-1}) & \nu & = \check{\nu} + o(r^{-1}).
\end{align*}
Using \eqref{eq:MCF_Expander}, we can show the mean curvature decays at the following rate:
\begin{equation}
\label{eq:HDecay}
H = \frac{1}{2}\langle F, \nu\rangle = \frac{1}{2}\left\langle \check{F} + u\check{\nu}, \check{\nu} + o(r^{-1})\right\rangle = o(1).
\end{equation}
Here we have used the fact that $\abs{\check{F}} = r$ and $u = o(1)$.

Now given that $\Sigma$ is convex, \eqref{eq:HDecay} shows each principal curvature of $\Sigma$ converges uniformly to $0$ as $r \to \infty$. In particular, we also have:
\[\lim_{r \to \infty} \sup_{E \backslash B_r(0)} \abs{A} = 0.\]
To guarantee that $f_R \to 0$ uniformly as $r \to \infty$, it is sufficient that the end $E$ be $C^1$-asymptotic to a round cone (see the last part of the proof of Proposition \ref{lma:C2Decay}). Therefore, one may proceed in exactly the same way as in Propositions \ref{PAC} and \ref{PDC} to show the following:

\begin{proposition}
Suppose $\Sigma$ is a complete, convex self-expander. If either:
\begin{itemize}
	\item $\Sigma$ has only one end $E$ which is $C^1$-asymptotic to a round cone $\cone$; or
	\item $\Sigma$ has only two ends $E_\alpha$ and $E_\beta$ which are $C^1$-asymptotic to round cones with the same rotation axis,
\end{itemize}
then $\Sigma$ is rotationally symmetric about the axis of the cone.
\end{proposition}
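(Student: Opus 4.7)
The plan is to run exactly the argument of Propositions \ref{PAC} and \ref{PDC}, since the only input those proofs require is that each end $E$ satisfies the two conditions
\[ \lim_{r\to\infty} \sup_{E\setminus B_r(0)} |A| = 0 \quad \text{and} \quad \lim_{r\to\infty} \sup_{E\setminus B_r(0)} |f_R| = 0, \]
where $f_R := \langle R, \nu\rangle$ for $R$ an arbitrary rotational Killing vector field about the common axis. Once both conditions are in hand, Lemma \ref{lma:Laplace_f} gives $\left(L+\tfrac{1}{2}\right)f_R = \tfrac{1}{2}f_R$, and Theorem \ref{Tliouville} with $\lambda = \tfrac{1}{2}$ yields $f_R\equiv 0$ on all of $\Sigma$; since $R$ ranges over all rotational Killing fields about the axis, $\Sigma$ is rotationally symmetric.

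The new content, therefore, is to verify that the two asymptotic conditions hold under the weaker $C^1$-asymptotic hypothesis, using convexity as compensation. First I would note that the decay $|f_R|\to 0$ along each end is the content of the final paragraph in the proof of Proposition \ref{lma:C2Decay}(2): that argument only uses the inputs $u = o(1)$, $\nu = \check\nu + o(r^{-1})$, and $|\check F| = r$, all of which remain available under $C^1$-asymptoticity as recorded in the paragraph immediately preceding the proposition. So this condition transfers for free.

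Second, I would establish $|A|\to 0$ by combining the mean curvature decay $H = o(1)$, already derived in \eqref{eq:HDecay} from the self-expander equation and the $C^1$-asymptotic expansions, with convexity. Convexity means all principal curvatures $\kappa_1,\dots,\kappa_n$ of $\Sigma$ are nonnegative, so for each $i$,
\[ 0 \leq \kappa_i \leq \kappa_1 + \cdots + \kappa_n = H, \]
hence $|A|^2 = \sum_i \kappa_i^2 \leq n H^2 = o(1)$ uniformly on $E\setminus B_r(0)$ as $r\to\infty$. This is precisely where the hypothesis must be strengthened from mean-convex to convex: for a merely mean-convex surface, $H\to 0$ does not control the individual principal curvatures, and one would genuinely need the $C^2$-asymptotic control used in Proposition \ref{lma:C2Decay}(1) to estimate $|A|$ directly.

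With these two decay facts in place on each end (one end in the first bullet, two coaxial ends in the second), the proof concludes exactly as in Propositions \ref{PAC} and \ref{PDC}: apply Theorem \ref{Tliouville} to the eigenfunction $f_R$ and conclude $f_R\equiv 0$ for every rotational Killing field $R$ about the axis. The main (and essentially only) obstacle is the step of the previous paragraph, and it is resolved entirely by the observation $\kappa_i\leq H$ afforded by convexity.
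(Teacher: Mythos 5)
Your proposal is correct and follows essentially the same route as the paper's own argument: the paper likewise derives $H = o(1)$ from the self-expander equation under the $C^1$-asymptotic expansions, uses convexity to force every principal curvature (trapped between $0$ and $H$) to decay uniformly so that $|A| \to 0$, observes that the $f_R$-decay argument in Proposition \ref{lma:C2Decay} uses only $C^1$ data, and then concludes via Theorem \ref{Tliouville} exactly as in Propositions \ref{PAC} and \ref{PDC}. Your explicit bound $|A|^2 \leq nH^2$ is simply a quantified version of the paper's remark that convexity makes each principal curvature converge uniformly to zero.
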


\bibliographystyle{amsplain}
\bibliography{citations}

\providecommand{\bysame}{\leavevmode\hbox to3em{\hrulefill}\thinspace}
\providecommand{\MR}{\relax\ifhmode\unskip\space\fi MR }
\providecommand{\MRhref}[2]{%
  \href{http://www.ams.org/mathscinet-getitem?mr=#1}{#2}
}
\providecommand{\href}[2]{#2}
\begin{thebibliography}{10}

\bibitem{BenAndrews:1994jx}
Ben Andrews, \emph{{Contraction of convex hypersurfaces in Euclidean space}},
  Calculus of Variations and Partial Differential Equations \textbf{2} (1994),
  no.~2, 151--171.

\bibitem{angenent1995computed}
Sigurd Angenent, Tom Ilmanen, and David~L. Chopp, \emph{A computed example of
  nonuniqueness of mean curvature flow in $\mathbb{R}^3$}, Communications in
  Partial Differential Equations \textbf{20} (1995), no.~11-12, 1937--1958.

\bibitem{Bourni:2016tn}
Theodora Bourni and Mat Langford, \emph{{Type-II singularities of two-convex
  immersed mean curvature flow}}, preprint, arXiv:1605.09609v1.

\bibitem{brendle2013rotational}
Simon Brendle, \emph{Rotational symmetry of self-similar solutions to the
  {R}icci flow}, Inventiones Mathematicae \textbf{194} (2013), no.~3, 731--764.

\bibitem{brendle2014rotational}
\bysame, \emph{Rotational symmetry of {R}icci solitons in higher dimensions},
  Journal of Differential Geometry \textbf{97} (2014), no.~2, 191--214.

\bibitem{Chau:2009wu}
Albert Chau, Jingyi Chen, and Weiyong He, \emph{{Entire self-similar solutions
  to Lagrangian mean curvature flow}}, preprint, arXiv:0905.3869v1.

\bibitem{Chau:2011ff}
\bysame, \emph{{Lagrangian mean curvature flow for entire Lipschitz graphs}},
  Calculus of Variations and Partial Differential Equations \textbf{44} (2011),
  no.~1-2, 199--220.

\bibitem{chodosh2014expanding}
Otis Chodosh, \emph{Expanding {R}icci solitons asymptotic to cones}, Calculus
  of Variations \& Partial Differential Equations \textbf{51} (2014).

\bibitem{chodosh2016rotational}
Otis Chodosh and Frederick Tsz-Ho Fong, \emph{Rotational symmetry of conical
  {K}{\"a}hler--{R}icci solitons}, Mathematische Annalen \textbf{364} (2016),
  no.~3-4, 777--792.

\bibitem{colding2012generic}
Tobias~H Colding and William~P Minicozzi~II, \emph{Generic mean curvature flow
  {I}; generic singularities}, Annals of Mathematics \textbf{175} (2012),
  no.~2, 755--833.

\bibitem{Ding:2015vs}
Qi~Ding, \emph{{Minimal cones and self-expanding solutions for mean curvature
  flows}}, preprint, arXiv:1503.02612v1.

\bibitem{drugan2016rotational}
Gregory Drugan, Frederick Tsz-Ho Fong, and Hojoo Lee, \emph{Rotational symmetry
  of self-expanders to the inverse mean curvature flow with cylindrical ends},
  preprint, arXiv:1608.02137v1.

\bibitem{ecker1989mean}
Klaus Ecker and Gerhard Huisken, \emph{Mean curvature evolution of entire
  graphs}, Annals of Mathematics \textbf{130} (1989), no.~3, 453--471.

\bibitem{haslhofer2015uniqueness}
Robert Haslhofer, \emph{Uniqueness of the bowl soliton}, Geometry \& Topology
  \textbf{19} (2015), no.~4, 2393--2406.

\bibitem{helmensdorfer2012solitons}
Sebastian Helmensdorfer, \emph{Solitons of geometric flows and their
  applications}, Ph.D. thesis, University of Warwick, 2012.

\bibitem{Huisken:1999wk}
Gerhard Huisken and Alexander Polden, \emph{{Geometric evolution equations for
  hypersurfaces}}, Calculus of Variations and Geometric Evolution Problems
  (Stefan Hildebrandt and Michael Struwe, eds.), Springer Berlin Heidelberg,
  Berlin, Heidelberg, 1999, pp.~45--84.

\bibitem{Joyce:2010uw}
Dominic Joyce, Yng-Ing Lee, and Mao-Pei Tsui, \emph{{Self-similar solutions and
  translating solitons for Lagrangian mean curvature flow}}, Journal of
  Differential Geometry \textbf{84} (2010), no.~1, 127--161.

\bibitem{Lee:2009ty}
Yng-Ing Lee and Mu~Tao Wang, \emph{{Hamiltonian stationary shrinkers and
  expanders for Lagrangian mean curvature flows}}, Journal of Differential
  Geometry \textbf{83} (2009), no.~1, 27--42.

\bibitem{Lee:2010bu}
\bysame, \emph{{Hamiltonian stationary cones and self-similar solutions in
  higher dimension}}, Transactions of the American Mathematical Society
  \textbf{362} (2010), no.~3, 1491--1503.

\bibitem{Lotay:2013jf}
Jason~D Lotay and Andr{\'e} Neves, \emph{{Uniqueness of Lagrangian
  self-expanders}}, Geometry {\&} Topology \textbf{17} (2013), no.~5,
  2689--2729.

\bibitem{Neves:2013hh}
Andr{\'e} Neves and Gang Tian, \emph{{Translating solutions to Lagrangian mean
  curvature flow}}, Transactions of the American Mathematical Society
  \textbf{365} (2013), no.~11, 5655--5680.

\bibitem{Schoen:1983}
Richard~M Schoen, \emph{{Uniqueness, symmetry, and embeddedness of minimal
  surfaces}}, Journal of Differential Geometry \textbf{18} (1983), no.~4,
  791--809 (1984).

\bibitem{Simon1986}
Leon Simon and Bruce Solomon, \emph{Minimal hypersurfaces asymptotic to
  quadratic cones in $\mathbb{R}^{n+1}$}, Inventiones Mathematicae \textbf{86}
  (1986), 535--552.

\bibitem{stavrou1998selfsimilar}
Nikolaos Stavrou, \emph{Selfsimilar solutions to the mean curvature flow},
  Journal f{\"u}r die reine und angewandte Mathematik \textbf{499} (1998),
  189--198.

\bibitem{wang2014uniqueness}
Lu~Wang, \emph{Uniqueness of self-similar shrinkers with asymptotically conical
  ends}, Journal of the American Mathematical Society \textbf{27} (2014),
  no.~3, 613--638.

\bibitem{wang2016uniqueness}
\bysame, \emph{Uniqueness of self-similar shrinkers with asymptotically
  cylindrical ends}, Journal f{\"u}r die reine und angewandte Mathematik
  (Crelles Journal) \textbf{2016} (2016), no.~715, 207--230.

\end{thebibliography}

\end{document}